\documentclass[11pt]{article}
\usepackage{psfrag}
\usepackage{amssymb,bbm}
\usepackage{amsmath}
\usepackage{amsfonts}
\usepackage{graphicx}
\usepackage{yfonts}
\usepackage{setspace}
\usepackage[all]{xy}
\usepackage{pstricks,pst-math,pst-infixplot}
\usepackage[dotinlabels]{titletoc}

\newcounter{theorem}
\newtheorem{theorem}{Theorem}

\newtheorem{lemma}{Lemma}

\newtheorem{example}{Example}

\newtheorem{remark}{Remark}
\newtheorem{definition}{Definition}

\newenvironment{proof}[1][Proof]{\textbf{#1.} }{\rule{0.5em}{0.5em}}

\oddsidemargin 0.5cm \evensidemargin 0cm \voffset -1cm \topmargin
0cm \headheight 0.5cm \headsep 1.5cm \textheight 22cm \textwidth
14.5cm \marginparwidth 0cm \marginparsep 0cm

\title{Tight Span of Subsets of The Plane \\ With The Maximum Metric}

\author{Mehmet KILI\c{C}  \footnote{Anadolu
University, Science Faculty, Department of Mathematics, 26470,
Eskişehir, Turkey, \newline e-mails: kompaktuzay@gmail.com,
 skocak@anadolu.edu.tr}  \and \c{S}ahin KO\c{C}AK $^*$ \thanks{Corresponding Author.}}

\begin{document}

\maketitle

\begin{abstract}

 We prove that a nonempty closed and geodesically convex subset
of the $l_{\infty}$ plane $\mathbb{R}^2_{\infty}$ is hyperconvex
and we characterize the tight spans of arbitrary subsets of
$\mathbb{R}^2_{\infty}$ via this property: Given any nonempty
$X\subseteq\mathbb{R}^2_{\infty}$, a closed, geodesically convex
and minimal subset $Y\subseteq\mathbb{R}^2_{\infty}$ containing
$X$
 is isometric to the tight span $T(X)$ of $X$.

\noindent \textbf{Keywords}: tight span,  hyperconvexity, injective envelope,
Manhattan plane.

\noindent \textbf{MSC[2010]}: 51F99, 52A30.

\end{abstract}

\section{Introduction}

The notion of hyperconvexity and the associated notion of
hyperconvex or injective hull of metric spaces were introduced in
the two important papers Aronszajn - Panitchpakdi~\cite{aro} and
Isbell~\cite{isb}. About twenty years later Dress~\cite{dre}
rediscovered the injective hull (which he called the tight span)
of metric spaces and opened new ways of looking at the problem of
optimal realizations of finite metric spaces in weighted graphs.
Though that paper of Dress was a turning point, it remained a
notoriously difficult problem to construct the tight span of
finite metric spaces with more than a few points and relate them
to optimal realizations (\cite{Koolen}, \cite{Sturmfels},
\cite{Althofer}). Recently D. Eppstein~\cite{epp} gave an
algorithm which decides whether the tight span of a finite metric
space can be embedded into the $l_1$ plane (i.e. the so-called
Manhattan plane) and he constructed the tight span of a subset of
the Manhattan plane under a certain condition (see Thm~\ref{at-2}
below). In this note we want to characterize (in Thm~\ref{at}) the
tight span of any subset of the $l_{\infty}$-plane (which is
isometric to Manhattan plane) without any restrictions (and
without relying on Eppstein's theorem).

Aronszajn-Panitchpakdi called a metric space $(X,d)$
hyperconvex, if for any collection $(x_i)_{i\in I}$ of points in
$X$ and any collection $(r_i)_{i\in I}$ of nonnegative real
numbers satisfying $d(x_i,x_j)\leq r_i+r_j$ for all $i,j\in I$,
the intersection of closed balls around $x_i$ with radius $r_i$ is
nonempty: $\bigcap_{i\in I}\bar{B}(x_i,r_i)\neq\emptyset$.
($\bar{B}(x_i,r_i)=\{x\in X|\ d(x_i,x)\leq r_i\}$). They showed
that a hyperconvex metric space $X$ is retract of any space $Y$,
where $X$ is isometrically embedded in, whereby the retraction can
be chosen nonexpansive (i.e. distance - non - increasing).

Isbell adopted a more categorical point of view and constructed in
the category, whose objects are metric spaces and whose morphisms
are non-expansive maps, injective objects and injective hulls for
any metric spaces. An injective object in this category is a
metric space $X$, which satisfies the following property: For any
isometric embedding $i:Y\rightarrow Z$ and any morphism
$f:Y\rightarrow X$ in this category, there exists an extension of
$f$ to $Z$; i.e. a morphism $\widetilde{f}:Z\rightarrow X$ such
that the following diagram commutes:

\[
\xymatrix{Y \ar@{^{(}->}[rr]^i \ar[dr]_f && Z \ar@{-->}[dl]^{\widetilde{f}} \\& X &}
\]

Isbell showed that for any metric space $X$ there exists an
injective metric space $\tilde{X}$ with an isometric embedding
$i:X\hookrightarrow \tilde{X}$ such that there is no proper
injective subspace of $\tilde{X}$ containing $i(X)$. He also
showed that this property characterizes $\tilde{X}$ up to
isometry. He called this space the injective envelope of the
metric space $X$. For an excellent survey on hyperconvexity and
injectivity we refer to \cite{esp}. It turns out that a metric
space is hyperconvex if and only if it is injective and
consequently the
 injective hull and the hyperconvex hull of a metric space $X$
(i.e. a minimal hyperconvex space containing $X$) are isometric
objects (see also \cite{kha}). We note for later use the rather
startling property that a metric space is hyperconvex if any
isometric embedding $f:X\rightarrow X\cup \{y\}$, where $y\notin
X$,  has a non-expansive retraction (see~\cite{esp}).

We recall briefly the construction of the injective envelope of
Isbell (or, with another terminology, the ``tight span" of Dress).
Let $X$ be any metric space. Consider the set $T(X)$ of functions
$f:X\rightarrow\mathbb{R}^{\geq0}$ satisfying the following two
properties:

\begin{enumerate}
\item[i)] $f(x)+f(y)\geq d(x,y)$, for all $x,y\in X.$
\item[ii)]$\inf_{y\in X}(f(x)+f(y)-d(x,y))=0$, for all $x\in X.$
\end{enumerate}

The second property implies that the functions satisfying these
properties are minimal in the sense that the point-wise values of
a function $f$ can not be lowered. On the other hand, if $f$ is a
minimal function satisfying the first property (i.e. if $g$ is
another function satisfying the first property and $g\leq f$, then
$g=f$), then $f$ satisfies the second property.

The tight span (or injective envelope) $T(X)$ of $X$ is obtained by putting the supremum metric $d_{\infty}$ on the set $T(X)$:
\[d_{\infty}(f,g)=\sup_{x\in X}|f(x)-g(x)|.\]

\section{Tight Span of Subsets of The $l_{\infty}$ Plane}

D. Epstein gave the following theorem about the tight span of
subsets of the Manhattan plane (Lemma 9 in \cite{epp}):

\begin{theorem} \label{at-2}
Let $X$ be a nonempty subset of the $l_1$ plane (not necessarily
finite). If the orthogonal convex hull $H$ of $X$ is connected,
then $H$ is isometric to the tight span of $X$. (Orthogonal convex
hull is defined to consist of all points surrounded by $X$, and a
point $p$ is the $l_1$ plane is said to be surrounded by $X$ if
each of the four closed axis-aligned quadrants with $p$ as their
apex contains at least one point of $X$.)
\end{theorem}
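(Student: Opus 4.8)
I take $H$ to be closed (this is needed for the statement, since the tight span is always a complete metric space, and it matches the hypotheses of our Theorem~\ref{at}; when the set of points surrounded by $X$ is not already closed one works with its closure, the maps below extending by continuity). Note that $X\subseteq H$, since each of the four closed axis-aligned quadrants with apex $x\in X$ contains $x$ itself; write $Q^{++}(p),Q^{--}(p),Q^{+-}(p),Q^{-+}(p)$ for these four closed quadrants with apex $p$. The plan is to realise $H$ as $T(X)$ directly: define $\kappa\colon H\to T(X)$ by $\kappa(h)=d_1(h,\cdot)|_X$, prove that $\kappa$ is a well-defined isometric embedding whose restriction to $X$ is the canonical embedding of $X$ into $T(X)$, and then prove that $H$ is hyperconvex; since $T(X)$ is the injective envelope of $X$ it contains no proper injective subspace carrying the canonical image of $X$, so the injective subspace $\kappa(H)$ must be all of $T(X)$, whence $\kappa$ is a bijective isometry.

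First I would check $\kappa$ takes values in $T(X)$. Condition (i) for $\kappa(h)$ is just the triangle inequality for $d_1$. For condition (ii), fix $x\in X$ and choose $x'\in X$ in the closed quadrant with apex $h$ \emph{opposite} to one containing $x$ -- such an $x'$ exists exactly because $h\in H$ is surrounded by $X$ -- so that $h$ lies in the bounding box of $x$ and $x'$, giving $d_1(h,x)+d_1(h,x')-d_1(x,x')=0$ and hence vanishing of the infimum in (ii). That $\kappa$ is non-expansive is again the triangle inequality; for the opposite inequality, given $h\ne h'$ in $H$ pick $x\in X$ in the closed quadrant with apex $h$ opposite to one containing $h'$ (possible since $h\in H$), so that $h$ lies in the bounding box of $x$ and $h'$ and therefore $|d_1(h,x)-d_1(h',x)|=d_1(h,h')$; thus $d_\infty(\kappa(h),\kappa(h'))=d_1(h,h')$. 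As $\kappa(x)=d_1(x,\cdot)|_X$ is precisely the canonical embedding when $x\in X$, this presents $\kappa(H)$ as an isometric copy of $H$ inside $T(X)$ that contains the canonical image of $X$.

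It remains to prove $H$ hyperconvex, and this is where the real work lies. I would obtain hyperconvexity from geodesic convexity via our Theorem~\ref{at} -- a closed, geodesically convex subset of $\mathbb{R}^2_1\cong\mathbb{R}^2_\infty$ is hyperconvex, and that implication is proved independently of the present theorem -- so it is enough to show \emph{that $H$ is geodesically convex whenever it is connected}. The first step is easy: $H$ meets every axis-parallel line in a connected set, because the quadrant conditions are monotone -- e.g.\ if $(c,y_1),(c,y_2)\in H$ with $y_1\le y\le y_2$ then $Q^{++}(c,y)\supseteq Q^{++}(c,y_2)$, $Q^{--}(c,y)\supseteq Q^{--}(c,y_1)$, $Q^{+-}(c,y)\supseteq Q^{+-}(c,y_1)$ and $Q^{-+}(c,y)\supseteq Q^{-+}(c,y_2)$, so all four quadrants with apex $(c,y)$ meet $X$ and $(c,y)\in H$ (horizontal lines similarly). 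The second step -- the main obstacle -- is the topological fact that a closed, connected planar set meeting every axis-parallel line in a connected set is \emph{staircase-connected}: any two of its points are joined by a monotone rectilinear staircase lying inside it, and such a staircase is an $l_1$-geodesic, which is exactly geodesic convexity. I would prove this by a connectedness argument: fixing $p\in H$ and letting $A$ be the set of points of $H$ joined to $p$ by an $l_1$-geodesic lying in $H$, one uses the line-connectivity just established to show $A$ is relatively open in $H$ (and relatively closed, by a compactness argument using closedness of $H$), so $A=H$ by connectedness. Establishing the local prolongation of staircases that makes $A$ open is the technical heart, and connectedness is genuinely needed -- staircase-connectedness already fails for the orthogonal hull of two points whose bounding box is non-degenerate. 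Once $H$ is known hyperconvex, hence injective, the injective subspace $\kappa(H)\subseteq T(X)$ carries the canonical image of $X$, so by Isbell's characterisation of the injective envelope $\kappa(H)=T(X)$; hence $\kappa$ is onto, and $H$ is isometric to $T(X)$.
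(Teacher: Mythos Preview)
The paper does not actually prove this statement: Theorem~\ref{at-2} is quoted from Eppstein (Lemma~9 of \cite{epp}), and Remark~1 immediately notes that it needs the closure fix you also adopt. So there is no ``paper's own proof'' to compare against; your proposal is an attempt to \emph{derive} Eppstein's result from the paper's independent Theorem~\ref{at2}, which is a perfectly reasonable and rather elegant programme.

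Your Kuratowski-type embedding $\kappa(h)=d_1(h,\cdot)|_X$ is handled correctly: the verification of condition~(ii) via an opposite-quadrant witness, and the isometry argument via a point $x\in X$ in the quadrant at $h$ opposite to $h'$, both exploit exactly the ``surrounded'' hypothesis and are sound. The concluding step---$\kappa(H)$ is injective and contains the canonical image of $X$, hence equals $T(X)$ by the defining minimality of the injective envelope---is also fine and neatly bypasses any need to argue that $H$ is \emph{minimal} among closed geodesically convex supersets of $X$. One small correction: the result you invoke (``closed and geodesically convex in $\mathbb{R}^2_\infty$ implies hyperconvex'') is Theorem~\ref{at2}, not Theorem~\ref{at}.

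The genuine gap is the one you yourself flag: the implication ``$H$ closed, connected, and meeting every axis-parallel line in an interval $\Rightarrow$ $H$ is staircase-connected (hence $l_1$-geodesically convex)''. This is true, but your clopen-argument sketch is not yet a proof. Concretely, for $p,q\in H$ with $p_1<q_1$, $p_2<q_2$, neither corner $(q_1,p_2)$ nor $(p_1,q_2)$ need lie in $H$ (the quadrant $Q^{+-}(q_1,p_2)$ may miss $X$ entirely, and likewise $Q^{-+}(p_1,q_2)$), so the local prolongation of staircases is not as immediate as one might hope; you really do have to build the monotone path by a limiting or inductive argument across the horizontal slices $H\cap(\mathbb{R}\times\{t\})$, and closedness of $H$ is essential there. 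If you fill in that lemma, the rest of your argument goes through and gives a self-contained derivation of Eppstein's theorem from Theorem~\ref{at2}, independent of \cite{epp}.
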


\begin{remark}
This theorem is not true as it stands, since, for example for an open square,
the orthogonal convex hull equals this open square and is not hyperconvex; hence, it can not be the tight span.
 But D. Eppstein remarks that one can fix the theorem by taking the closure of the orthogonal convex hull (\cite{epp1}).
\end{remark}

It is well-known that the $l_1$ plane is isometric to the $l_{\infty}$ plane (though this is false for higher dimensions) and we prefer, only as a matter of taste, the $l_{\infty}$ plane
 with the maximum metric, since the $l_{\infty}$ spaces are the natural home of tight spans.

We will give below a theorem (\emph{Thm} \ref{at}) characterizing
the tight span of any subset of the $l_{\infty}$ plane. We first
recall that in the $l_{\infty}$ plane, which we denote by
$\mathbb{R}^2_{\infty}$ (i.e. $(\mathbb{R}^2,d_{\infty})$ with
$d_{\infty}((p_1,p_2),(q_1,q_2))=\max\{|p_1-q_1|,|p_2-q_2|\}$),
between any two points $p=(p_1,p_2)$ and $y=(q_1,q_2)$ there exist
paths whose length equal $d_{\infty}(p,q)$. Such a path realizing
the distance between the points $p$ and $q$ is called a geodesic
if it is parameterized by arc length. In this sense
$\mathbb{R}^2_{\infty}$ is a strictly intrinsic metric space in
the terminology of \cite{bur} and geodesic space in the
terminology of \cite{pap}. A subspace
$X\subseteq\mathbb{R}^2_{\infty}$ is called geodesically convex if
for any two points $p,q\in X$, there exists a geodesic in
$\mathbb{R}^2_{\infty}$ which is contained in $X$. In other words,
a subspace $X\subseteq\mathbb{R}^2_{\infty}$ is strictly intrinsic
with respect to the induced metric if and only if it is
geodesically convex.
 We can now formulate the following theorem:

\begin{theorem}
\label{at} Let $X\subseteq\mathbb{R}^2_{\infty}$ be a nonempty
subspace. Let $Y\subseteq\mathbb{R}^2_{\infty}$ be a closed,
geodesically convex subspace containing $X$ and minimal with these
properties. Then $Y$ is isometric to the tight span of $T(X)$ of
$X$.
\end{theorem}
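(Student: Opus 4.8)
The plan is to break the theorem into two halves, matching the abstract's phrasing: first, that any nonempty closed geodesically convex subset of $\mathbb{R}^2_\infty$ is hyperconvex; second, that a minimal such $Y$ containing $X$ is isometric to $T(X)$. The first half is the geometric heart of the matter; the second half is then a soft argument using the characterization of the tight span as the minimal hyperconvex (equivalently injective) space containing $X$.

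For the hyperconvexity of a closed geodesically convex $Y\subseteq\mathbb{R}^2_\infty$, I would argue directly from the Helly-type definition. Given points $y_i\in Y$ and radii $r_i\ge 0$ with $d_\infty(y_i,y_j)\le r_i+r_j$, the balls $\bar B(y_i,r_i)$ are axis-aligned squares, and $\mathbb{R}^2_\infty$ itself is hyperconvex (the plane with the $\sup$-metric is a classical hyperconvex space), so $C:=\bigcap_i \bar B(y_i,r_i)\neq\emptyset$ and $C$ is a nonempty intersection of squares, hence a (possibly degenerate) closed axis-aligned rectangle, in particular closed and geodesically convex. The problem is that $C$ need not meet $Y$. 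The key step is to show $C\cap Y\neq\emptyset$. Here I would use the structure of geodesics in $\mathbb{R}^2_\infty$: pick any $y_i$; since $y_i\in\bar B(y_j,r_j)$ fails in general, but $d_\infty(y_i,y_j)\le r_i+r_j$, a suitable point of $C$ can be reached from $y_i$ along a monotone staircase path. Concretely, I would take a point $c\in C$ nearest to $Y$ (or nearest to some fixed $y_1\in Y$), and show that the "rectangle hull'' of $\{y_1,c\}$ — the smallest axis-aligned rectangle containing both — is contained in $\bigcup_i \bar B(y_i,r_i\!+\!\text{slack})$ in a way that forces a geodesic from $y_1$ into $C$ to stay inside $Y$; more robustly, I would prove the auxiliary fact that for closed geodesically convex $Y$ and any $y\in Y$, $p\in\mathbb{R}^2_\infty$, the ball $\bar B(p,d_\infty(p,Y))$ meets $Y$, and that the metric projection onto $Y$ is nonexpansive (this is essentially the retract property from Aronszajn–Panitchpakdi, reversed: I want to build the retraction by hand using staircase geodesics). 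Using nonexpansiveness of the projection $\pi\colon\mathbb{R}^2_\infty\to Y$, the point $\pi(c)$ satisfies $d_\infty(\pi(c),y_i)=d_\infty(\pi(c),\pi(y_i))\le d_\infty(c,y_i)\le r_i$, so $\pi(c)\in C\cap Y$, giving hyperconvexity.

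The genuinely hard step is therefore establishing that a closed geodesically convex $Y\subseteq\mathbb{R}^2_\infty$ admits a nonexpansive retraction $\pi$ from the whole plane (equivalently, from $Y\cup\{p\}$ for each $p$, by the "startling property'' quoted in the introduction). For a single added point $p$, I would describe $\pi(p)$ explicitly: the set of points of $Y$ at minimal $d_\infty$-distance from $p$ is a nonempty face of a small square around $p$ intersected with $Y$, and using geodesic convexity one checks a canonical choice (e.g. the point of this set closest to $p$ in the Euclidean sense, or a lexicographically determined corner) varies nonexpansively. The case analysis is driven by the position of $p$ relative to $Y$ and the orientation of the geodesic staircases in $Y$; connectedness of $Y$ (which follows from geodesic convexity) is what prevents the projection from "jumping''. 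I expect this retraction-construction to be where almost all the real work lies, and where the planarity and the specific shape of $\ell_\infty$-balls are essential.

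Once hyperconvexity of closed geodesically convex subsets is in hand, the second half is quick. Given $X$, let $Y$ be closed, geodesically convex, containing $X$, and minimal for these properties (such $Y$ exists by a Zorn's lemma argument, since the intersection of a decreasing chain of closed geodesically convex sets containing $X$ is again closed and geodesically convex and contains $X$ — geodesic convexity passes to nested intersections because a geodesic between two points in the intersection, once it exists in each member, can be taken to coincide). By the first half, $Y$ is hyperconvex, hence injective. Now inside $Y$ take any injective (equivalently hyperconvex) subspace $Y'$ with $X\subseteq Y'\subseteq Y$; an injective subspace of $\mathbb{R}^2_\infty$ is a nonexpansive retract of $\mathbb{R}^2_\infty$, hence closed, and it is geodesically convex because a nonexpansive retract of a geodesic space onto which geodesics can be pushed is geodesically convex (more directly: hyperconvex subsets of $\mathbb{R}^2_\infty$ are intervals/"staircase-convex'' — this is exactly the content of the first part applied in reverse). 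Minimality of $Y$ then forces $Y'=Y$, so $Y$ is a \emph{minimal} injective space containing $X$, which is precisely Isbell's characterization of the injective envelope; hence $Y$ is isometric to $T(X)$. The one point needing care is the claim "injective subspace of $\mathbb{R}^2_\infty$ $\Rightarrow$ closed and geodesically convex,'' which I would get from: injective $\Rightarrow$ nonexpansive retract of any containing space $\Rightarrow$ (taking the containing space to be $\mathbb{R}^2_\infty$) a retract, and retracts of Hausdorff spaces are closed, while geodesic convexity of the retract follows by projecting an ambient geodesic and checking the projected path still has the right length, again using the explicit staircase description.
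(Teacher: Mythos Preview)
Your overall architecture is exactly the paper's: reduce Theorem~\ref{at} to the statement that every nonempty closed geodesically convex $A\subseteq\mathbb{R}^2_\infty$ is hyperconvex (this is the paper's Theorem~\ref{at2}), and then use Isbell's characterization of the injective envelope together with the facts that hyperconvex spaces are complete and strictly intrinsic (the paper's Lemma~\ref{at0}). Your deduction of the second half is actually more explicit than the paper's one-line ``obviously a consequence of,'' and it is correct: an injective $Y'$ with $X\subseteq Y'\subseteq Y$ is hyperconvex, hence complete (so closed in $\mathbb{R}^2_\infty$) and strictly intrinsic in the induced metric (so geodesically convex in $\mathbb{R}^2_\infty$), whence minimality of $Y$ forces $Y'=Y$.

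Where you diverge from the paper is in the construction of the one-point retraction $A\cup\{p\}\to A$, which is the entire substance of the argument. You propose to send $p$ to an $\ell_\infty$-nearest point of $A$, disambiguated by Euclidean distance or lexicographic order. The paper does \emph{not} do this. Instead it classifies $p$ according to how many of its four sectors $S_i^\varepsilon(p)$ meet $A$ (Lemma~\ref{at3} disposes of the four-sector case, Lemma~\ref{at4} of two opposite sectors), and in each remaining case pushes $p$ along a prescribed horizontal or diagonal ray until that ray, or an associated ``elbow'' $I^{++}\cup I^{+-}$, first meets $A$, sometimes composing two or three such moves. The resulting image $q$ is in general \emph{not} a nearest point of $A$: in the three-sectors case $q$ is the first point of $A$ on the horizontal ray from $p$, and nonexpansiveness $d_\infty(q,a)\le d_\infty(p,a)$ is checked sector by sector after first proving (via geodesic convexity of $A$) that $A$ avoids the interior of $S_1^-(q)$. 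Your nearest-point scheme is plausible but not self-evidently correct---metric projection in $\ell_\infty$ is non-unique, and a careless tie-break can fail nonexpansiveness even onto simple sets---and making it rigorous would in all likelihood require the same sector-based case analysis the paper carries out anyway. In short, the proposal is strategically sound but leaves the geometric heart unexecuted; the paper's contribution is precisely a concrete, verifiable retraction recipe in place of the nearest-point heuristic.
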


Before giving the proof we note that our assumptions are also
necessary. It is well-known that a hyperconvex metric space is
complete (see \cite{esp}) and we show that it is also strictly
intrinsic (though this property doesn't seem to be noted in the
literature):

\begin{lemma}
\label{at0}
A hyperconvex metric space is strictly intrinsic.
\end{lemma}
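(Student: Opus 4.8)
The plan is to deduce the statement from two features of a hyperconvex metric space $M$: it is \emph{Menger-convex} (between any two points there is a metric midpoint, and more generally a point dividing the segment in any prescribed ratio), and it is complete. Granting these, the classical theorem that a complete Menger-convex metric space is geodesic (Menger's theorem; see e.g. \cite{bur}) produces a shortest path between any two points, which is exactly what ``strictly intrinsic'' means in the terminology of \cite{bur}.

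For Menger convexity, fix $p,q\in M$, write $D=d(p,q)$, and let $\lambda\in[0,1]$. Apply the definition of hyperconvexity to the two-point family $\{p,q\}$ with radii $r_p=\lambda D$ and $r_q=(1-\lambda)D$: the compatibility condition $d(p,q)=D\leq r_p+r_q$ holds, so $\bar{B}(p,\lambda D)\cap\bar{B}(q,(1-\lambda)D)\neq\emptyset$. Any $z$ in this intersection satisfies $d(p,z)\leq\lambda D$ and $d(z,q)\leq(1-\lambda)D$, while the triangle inequality gives $D\leq d(p,z)+d(z,q)$; hence both inequalities are equalities. Taking $\lambda=\tfrac12$ yields a midpoint of every pair of points. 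Completeness of hyperconvex spaces is standard and is already cited in the excerpt (see \cite{esp}).

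To finish, one builds the geodesic by the usual midpoint iteration: set $\gamma(0)=p$, $\gamma(D)=q$, and recursively let $\gamma\!\left(\tfrac{(2k+1)D}{2^{n+1}}\right)$ be a midpoint of $\gamma\!\left(\tfrac{kD}{2^{n}}\right)$ and $\gamma\!\left(\tfrac{(k+1)D}{2^{n}}\right)$. An induction on $n$ shows that $d\bigl(\gamma(s),\gamma(t)\bigr)=|s-t|$ for all dyadic $s,t\in[0,D]$ (the upper bounds come from summing along the dyadic chain, the matching lower bounds from $D$ minus the lengths of the remaining subchains), so $\gamma$ is distance-preserving on a dense subset of $[0,D]$ and, by completeness of $M$, extends to an isometric embedding $\gamma:[0,D]\to M$ with $\gamma(0)=p$ and $\gamma(D)=q$. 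This is a path of length $D=d(p,q)$, so $M$ is strictly intrinsic. The only subtle point is the inductive check that the dyadic construction is globally distance-preserving rather than merely correct on consecutive points; this is precisely the content of Menger's theorem, and one may instead simply invoke it.
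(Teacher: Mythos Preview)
Your proof is correct and follows the same overall scheme as the paper: reduce ``strictly intrinsic'' to ``complete and has midpoints'' via the standard result in \cite{bur}, and then verify midpoints using hyperconvexity (completeness being already cited from \cite{esp}).

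The one noteworthy difference is how midpoints are produced. The paper argues by contradiction through \emph{injectivity}: assuming $x,y$ have no midpoint, it adjoins an external point $z$ with $d(x,z)=d(y,z)=\tfrac12 d(x,y)$ (using Lemma~\ref{at5} to extend the metric), invokes a nonexpansive retraction $r:X\cup\{z\}\to X$, and observes that $r(z)$ is a midpoint. You instead apply the ball-intersection definition of hyperconvexity directly to the pair $\{p,q\}$ with radii $\tfrac12 d(p,q)$, which is shorter and avoids the detour through injectivity and metric extensions. Your additional sketch of the dyadic construction is fine but not needed once you invoke \cite{bur}; the paper simply cites that theorem.
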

(We defer the proof of this lemma to the appendix).

Theorem~\ref{at} is obviously a consequence of the following

\begin{theorem}
\label{at2} A nonempty closed and geodesically convex subspace of
$\mathbb{R}^2_{\infty}$ is hyperconvex.
\end{theorem}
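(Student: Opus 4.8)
The plan is to verify the hyperconvexity of a nonempty closed, geodesically convex subspace $Y\subseteq\mathbb{R}^2_\infty$ by using the characterization recalled in the introduction: it suffices to show that every one-point isometric extension $Y\hookrightarrow Y\cup\{z\}$ admits a non-expansive retraction onto $Y$. Concretely, given a new point at distances $r_p = d(p,z)$ from each $p\in Y$ (with the triangle inequalities $|r_p-r_q|\le d_\infty(p,q)\le r_p+r_q$ holding), I must produce a single point $y_0\in Y$ with $d_\infty(y_0,p)\le r_p$ for all $p\in Y$; sending $z\mapsto y_0$ and fixing $Y$ is then the desired retraction. Equivalently, I must show $\bigcap_{p\in Y}\bar B(p,r_p)\cap Y\neq\emptyset$. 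First I would reduce to showing that the intersection of the balls $\bigcap_{p\in Y}\bar B_\infty(p,r_p)$, taken in the ambient plane $\mathbb{R}^2_\infty$, is nonempty and meets $Y$.

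The first main step is a geometric observation about the ambient plane: $\mathbb{R}^2_\infty$ is itself hyperconvex (closed balls are axis-parallel squares, and a nonempty family of squares with the pairwise Helly property has nonempty common intersection — this is the classical Nachbin/Aronszajn--Panitchpakdi fact that $\ell_\infty^n$ is hyperconvex). So $K := \bigcap_{p\in Y}\bar B_\infty(p,r_p)$ is a nonempty, closed, bounded, axis-parallel rectangle (possibly degenerate). The real work is to show $K\cap Y\neq\emptyset$. Here I would exploit closedness and geodesic convexity of $Y$ together with the special structure of squares and of geodesics in $\mathbb{R}^2_\infty$: a geodesic between two points of $\mathbb{R}^2_\infty$ can be taken to be a monotone staircase (or a straight segment) inside the "geodesic rectangle" spanned by the two endpoints, and $Y$ being geodesically convex means it contains such a staircase for every pair of its points. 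My intended argument: pick any $y\in Y$; if $y\in K$ we are done, otherwise $y$ violates $d_\infty(y,p_0)\le r_{p_0}$ for some $p_0\in Y$. Using the staircase geodesic from $y$ to $p_0$ lying in $Y$, and the fact that $r_{p_0}\le d_\infty(y,p_0)$ while $r_{p_0}\ge 0$, by a continuity/intermediate-value argument along the geodesic I locate a point of $Y$ that is "closer" to $K$; iterating (or taking an appropriate limit using closedness) I drive a sequence in $Y$ into $K$. To make this rigorous I would instead argue by a minimization: let $\rho = \inf\{\,\max_{p\in Y}(d_\infty(q,p)-r_p)^+ : q\in Y\,\}$, show the infimum is attained at some $y_0\in Y$ (using closedness of $Y$ and that the functional is $1$-Lipschitz and coercive on the bounded relevant region), and then show $\rho=0$ by deriving a contradiction from $\rho>0$: if $y_0$ were at positive "excess distance" $\rho$ from $K$, the set of constraints $p$ with $d_\infty(y_0,p)-r_p=\rho$ would, by the square geometry, allow a short move along a geodesic in $Y$ toward $K$ that strictly decreases the excess, contradicting minimality.

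The step I expect to be the genuine obstacle is precisely this last one: showing $\rho=0$, i.e. that geodesic convexity of $Y$ is strong enough to force the Helly intersection $K$ to actually meet $Y$ and not merely to be nonempty in the ambient plane. The difficulty is that $K$ is a rectangle while $Y$ may be a complicated staircase region, so one must understand how an axis-parallel rectangle sits relative to a geodesically convex set; I anticipate needing a lemma to the effect that in $\mathbb{R}^2_\infty$ a geodesically convex set is exactly characterized by a "four-quadrant" or monotone-staircase-closure condition (mirroring the orthogonal-convexity picture in Theorem~\ref{at-2}), and then checking by a short case analysis on the position of $y_0$ relative to $K$ (which of the four coordinate directions points toward $K$) that a suitable geodesic segment emanating from $y_0$ into $Y$ decreases every active constraint $d_\infty(y_0,p)-r_p$. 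Once $\rho=0$ and $y_0\in K\cap Y$ is produced, the retraction $z\mapsto y_0$ is non-expansive by construction, and hyperconvexity of $Y$ follows from the criterion quoted from~\cite{esp}. I would also remark that completeness of $Y$ (needed implicitly for attaining infima) is immediate since $Y$ is closed in the complete space $\mathbb{R}^2_\infty$.
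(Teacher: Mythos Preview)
Your plan shares its opening moves with the paper: both invoke the one-point-extension criterion for hyperconvexity and both use the hyperconvexity of the ambient $\mathbb{R}^2_\infty$ to pass from an abstract $z$ to planar data. After that the routes diverge. The paper does \emph{not} keep the whole rectangle $K=\bigcap_{p\in Y}\bar B(p,r_p)$; it retracts $z$ to a single concrete point $p\in\mathbb{R}^2_\infty$ (so $d_\infty(p,a)\le r_a$ for all $a$) and then, assuming $p\notin A$, performs an explicit case analysis on how many of the four sectors $S_i^\varepsilon(p)$ meet $A$. In each case it writes down, by hand, a point $q\in A$ with $d_\infty(q,a)\le d_\infty(p,a)$ for every $a\in A$; the ``four-quadrant'' statement you anticipate is exactly Lemma~\ref{at3}, used to exclude the four-sector case, and Lemmas~\ref{at2.5} and~\ref{at4} handle intersections of geodesics with the diagonal rays $I^{\varepsilon_1\varepsilon_2}(p)$. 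This is entirely constructive and avoids any minimization or limit argument.

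Your variational scheme is a genuinely different idea and is not wrong in outline, but the step you yourself flag as the obstacle is where essentially all the content lives, and your sketch of it is not yet a proof. Concretely: at a minimizer $y_0$ with $\rho(y_0)>0$ you must exhibit a direction \emph{inside $Y$} along which $\sup_{p}(d_\infty(\cdot,p)-r_p)$ strictly decreases. The supremum need not be attained and near-active constraints $p$ may sit in several sectors of $y_0$ simultaneously, so a naive ``move toward $K$'' can increase some of them; ruling this out forces you to analyze which sectors of $y_0$ can contain (near-)active $p$'s and to produce a geodesic in $Y$ heading into the correct $I^{\varepsilon_1\varepsilon_2}(y_0)$. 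That is precisely the sector-by-sector work the paper does directly (three-, two-, and one-sector cases), so your descent argument, once made rigorous, would most likely reinvent that case analysis rather than bypass it. The paper's reduction to a single point $p$ instead of the full $K$ buys a cleaner setup for this analysis, and its explicit constructions sidestep the compactness/active-set subtleties your approach must confront.
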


\begin{proof}
Let $A\subseteq\mathbb{R}^2_{\infty}$ be a closed and geodesically
convex subspace. It will be enough to show that $A$ is injective.
Let $A\cup\{z\}$ be an arbitrary one-point extension of the metric
space $A$. We have to show that there exists a nonexpansive
retraction $A\cup\{z\}\rightarrow A$. We first note that it will
be enough to assume $z\in\mathbb{R}^2_{\infty}$. Because,
otherwise we can extend the metric on $A\cup\{z\}$ to
$\mathbb{R}^2_{\infty}\cup\{z\}$ (see Lemma~\ref{at5} in the
appendix) and find by hyperconvexity of $\mathbb{R}^2_{\infty}$ a
nonexpansive retraction
$r:\mathbb{R}^2_{\infty}\cup\{z\}\rightarrow\mathbb{R}^2_{\infty}$.
Consider the point $p=r(z)$ and the embedding $A\hookrightarrow
A\cup\{p\}$. If there is a nonexpansive retraction
$r_p:A\cup\{p\}\rightarrow A$, then $r_p\circ
r|_{A\cup\{z\}}:A\cup\{z\}\rightarrow A$ is a nonexpansive
retraction. So we can work with one-point extensions
$A\hookrightarrow A\cup\{p\}$ for $p\in\mathbb{R}^2_{\infty}$ (in
fact $p\in\mathbb{R}^2_{\infty}\setminus A$).

Before proceeding with the proof, we want to give a few technical
definitions and lemmas (whose proofs we defer to the appendix) we
shall use during the proof.

\begin{definition}
\begin{itemize}
\item[i)]
For $p,q\in \mathbb{R}^n_{\infty}$ we define
\[
D_{pq}=\{u\in \mathbb{R}^n|\
d_\infty(p,u)+d_\infty(u,q)=d_\infty(p,q)\}.
\]
(see Fig.~\ref{fat4} for $n=2$)

$D_{pq}$ is the union of geodesic segments from $p$ to $q$.

\begin{figure}[h]
\begin{center}
\begin{pspicture*}(-1,-1)(4,4)
\psline{->}(-1,0)(3.5,0) \uput[r](3.5,0){$x$}
\psline{->}(0,-1)(0,3.5) \uput[u](0,3.5){$y$}
\pspolygon*[linecolor=lightgray](0.5,1.5)(1.5,0.5)(3,2)(2,3)
\psline(0.5,1.5)(1.5,0.5)(3,2)(2,3)(0.5,1.5) \psdot(0.5,1.5)
\psdot(3,2) \uput[d](0.5,1.5){$p$} \uput[d](3,2){$q$}
\uput[r](2,3){$D_{pq}$}
\end{pspicture*}
\caption{Union of geodesic segments from $p$ to $q$} \label{fat4}
\end{center}
\end{figure}
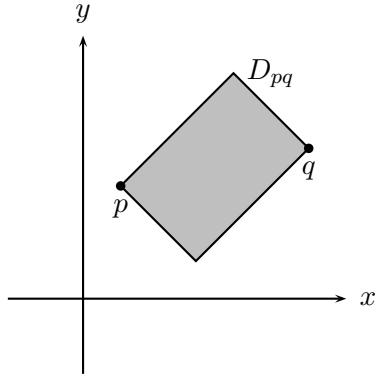

\item[ii)]
For $p\in \mathbb{R}^n_{\infty}$ we define
$S_i^{\varepsilon}(p)=\{q\in\mathbb{R}^n|\
d_{\infty}(p,q)=\varepsilon(q_i-p_i)\}$ for $i=1,2,\cdots,n$ and
$\varepsilon=\pm$ and call them the sectors at the point $p$ (see
Fig.~\ref{fat1})

Note that for $q\in S_1^{\varepsilon}(p)$,
$D_{pq}=S_i^\varepsilon(p)\cap S_i^{-\varepsilon}(q)$.

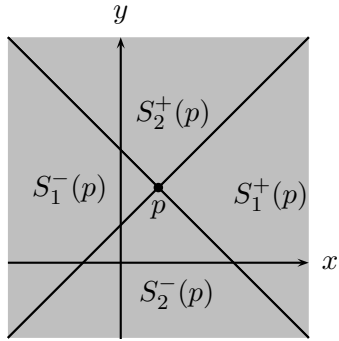
\begin{figure}[h]
\begin{center}
\begin{pspicture*}(-1.5,-1)(3,3.5)
\pspolygon*[linecolor=lightgray](0.5,1)(2.5,3)(2.5,-1)
\psline(0.5,1)(2.5,3) \psline(0.5,1)(2.5,-1)
\pspolygon*[linecolor=lightgray](0.5,1)(2.5,3)(-1.5,3)
\psline(0.5,1)(-1.5,3) \psline(0.5,1)(2.5,3)
\pspolygon*[linecolor=lightgray](0.5,1)(-1.5,-1)(-1.5,3)
\psline(0.5,1)(-1.5,-1) \psline(0.5,1)(-1.5,3)
\pspolygon*[linecolor=lightgray](0.5,1)(-1.5,-1)(2.5,-1)
\psline(0.5,1)(2.5,-1) \psline(0.5,1)(-1.5,-1)
\psline{->}(-1.5,0)(2.5,0) \uput[r](2.5,0){$x$}
\psline{->}(0,-1.5)(0,3) \uput[u](0,3){$y$} \psdot(0.5,1)
\uput[d](0.5,1){$p$} \uput[d](2,1.3){$S_1^+(p)$}
\uput[u](0.7,1.6){$S_2^+(p)$} \uput[l](0,1){$S_1^-(p)$}
\uput[d](0.75,0){$S_2^-(p)$}
\end{pspicture*}
\caption{Sectors of a point $p$ in the $l_{\infty}$ plane}
\label{fat1}
\end{center}
\end{figure}

\item[iii)]
For $p=(p_1,p_2)\in\mathbb{R}^2_{\infty}$ and
$\varepsilon_1,\varepsilon_2=\pm$ we call the set
\[
I^{\varepsilon_1\varepsilon_2}(p)=\{(p_1+\varepsilon_1t,p_2+\varepsilon_2t)|\
t\geq0\}
\]
the $\varepsilon_1\varepsilon_2$-ray at the point $p$ (see
Fig.~\ref{fat5})

\begin{figure}[h]
\begin{center}
\psset{unit=0.75cm}
\begin{pspicture*}(-4,-3)(6,4)
\psline{->}(-4,0)(5,0) \psline{->}(0,-3)(0,3.5) \uput[u](5,0){$x$}
\uput[r](0,3.5){$y$} \psdot(1,0.5) \uput[l](1,0.5){$p$}
\psline(1,0.5)(4,3.5) \uput[r](4,3.5){$I^{++}(p)$}
\psline(1,0.5)(4,-2.5) \uput[r](4,-2.5){$I^{+-}(p)$}
\psline(1,0.5)(-2,-2.5) \uput[l](-2,-2.5){$I^{--}(p)$}
\psline(1,0.5)(-2,3.5) \uput[l](-2,3.5){$I^{-+}(p)$}
\end{pspicture*}
\caption{The $\varepsilon_1\varepsilon_2$-rays at the point $p$}
\label{fat5}
\end{center}
\end{figure}
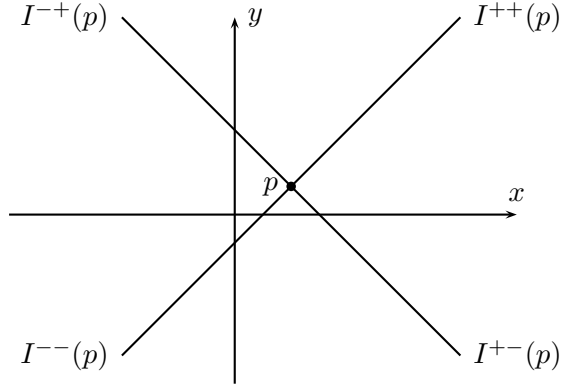
\end{itemize}
\end{definition}

Note that
$I^{\varepsilon_1\varepsilon_2}(p)=S_1^{\varepsilon_1}(p)\cap
S_2^{\varepsilon_2}(p)$.

\begin{lemma}
\label{at2.5}
Let $u\in\mathbb{R}^2_{\infty}$, $p\in S_1^{\varepsilon}(u)$,
$q\in S_2^{\delta}(u)$ and $\gamma$ a geodesic between the points
$p$ and $q$. Then there exists a $t$ (in the domain of definition
of $\gamma$) such that $\gamma(t)\in I^{\varepsilon\delta}(u)$.
\end{lemma}

\begin{lemma}
\label{at3} Let $A\subseteq \mathbb{R}^2_{\infty}$ a geodesically
convex subspace and $p\in\mathbb{R}^2_{\infty}$. If each sector
$S_i^{\varepsilon}(p)$ of the point $p$ contains a point of $A$,
then $p$ belongs to the set $A$.
\end{lemma}

\begin{lemma}
\label{at4} Let $A\subseteq\mathbb{R}^n_{\infty}$ a connected
subspace and $p\in\mathbb{R}^n$. If two opposite sectors
$S_i^+(p)$ and $S_i^-(p)$ of the point $p$ intersect the set $A$,
but no other sectors of $p$ intersect $A$, then $p\in A$.
\end{lemma}

Now we continue with the proof of Theorem~\ref{at2}.

Let any point $p=(p_1,p_2)\in\mathbb{R}^2\setminus A$ be given. We
have to construct a nonexpansive retraction $A\cup\{p\}\rightarrow
A$. We consider three cases, depending on how many sectors of $p$
intersect the set $A$.

Three-Sectors Case:

Let us assume that three sectors of $p$ intersect $A$. Without
loss of generality we can take the sectors $S_1^+(p)$, $S_2^+(p)$
and $S_2^-(p)$. Since $A$ is closed and connected (as a
geodesically convex subspace), the ray $\{(p_1+t,p_2)|\ t\geq0\}$
intersects the set $A$ at a point with minimal $t$, say $t_0$
(i.e. the first intersection point). Denote this point by $q=(q_1,
q_2)$. In the interior of the sector $S_1^-(q)$ there can be no
point of the set $A$.
 To see this, assume to the contrary that there exists a point $u\in A$ lying in this region. Without loss of generality we can assume that $u$ lies above the line
  $pq$ (i.e. with an ordinate higher than that of p). Now consider a point $v\in A\cap S_2^{-}(p)$. By geodesical convexity of $A$ there is a geodesic between $u$ and $v$,
   which must intersect the line $pq$. But this produces a point $w\in A$ left to the point $q$, which contradicts the choice of $q$ (see Fig.~\ref{fat6}).

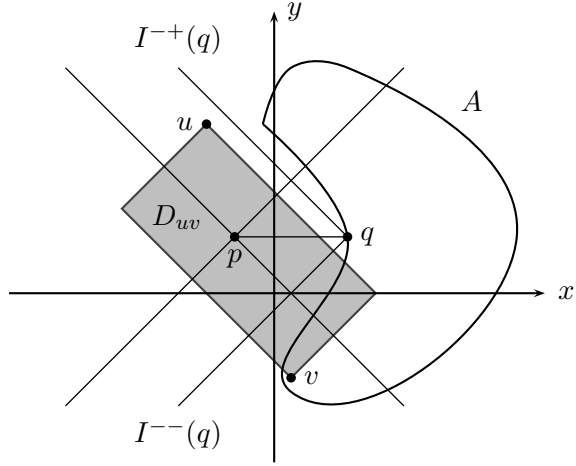
\begin{figure}[h!]
\begin{center}
\psset{unit=0.75cm}
\begin{pspicture*}(-5,-3)(5,5.5)
\pspolygon*[linecolor=lightgray](0,-1.5)(-3,1.5)(-1.5,3)(1.5,0)(0,-1.5)
\psline[linecolor=darkgray](0,-1.5)(-3,1.5)(-1.5,3)(1.5,0)(0,-1.5)
\uput[r](-2.7,1.35){$D_{uv}$}
\pscurve(-0.5,3)(1,1)(0,-1.75)(4,1)(1,4)(0,4)(-0.5,3)
\psline{->}(-5,0)(4.5,0)
\psline{->}(-0.3,-3)(-0.3,5)
\psline[linewidth=0.5pt](-4,-2)(2,4)
\psline[linewidth=0.5pt](-4,4)(2,-2)
\uput[r](4.5,0){$x$}
\uput[r](-0.3,5){$y$}
\uput[d](-1,1){$p$}
\psdot(-1,1)
\psdot(1,1)
\uput[r](1,1){$q$}
\psdot(-1.5,3)
\uput[l](-1.5,3){$u$}
\psdot(0,-1.5)
\uput[r](0,-1.5){$v$}
\uput[u](3.2,3){$A$}
\psline[linewidth=0.5pt](1,1)(-1,1)
\psline[linewidth=0.5pt](-2,4)(1,1)
\psline[linewidth=0.5pt](-2,-2)(1,1)
\uput[d](-2,-2){$I^{--}(q)$}
\uput[u](-2,4){$I^{-+}(q)$}
\end{pspicture*}
\caption{Three-sectors case} \label{fat6}
\end{center}
\end{figure}

We define the function $r:A\cup\{p\}\rightarrow A$,

 \begin{equation*}
 r(x)=\left\{\begin{array}{cll}
 x &,& x\in A\\
 q &,& x=p .\\
 \end{array}\right.
 \end{equation*}
The function $r$ is a nonexpansive retraction. To see this let $a=(a_1,a_2)\in A$. We saw above that $a\notin (S_1^-(q))^{\circ}$.
There are now three possibilities: $a\in S_1^+(q)$, $a\in S_2^+(q)$ and $a\in S_2^-(q)$.

If $a\in S_1^+(q)$; then
\begin{eqnarray*}
d_\infty(a,q)&=&a_1-q_1=a_1-(p_1+t_0)\\
&\leq&a_1-p_1= d_\infty(a,p).
\end{eqnarray*}

If $a\in S_2^+(q)$; then

\begin{eqnarray*}
d_\infty(a,q)=a_2-q_2=a_2-p_2\leq d_\infty(a,p).
\end{eqnarray*}

If $a\in S_2^-(q)$; then

\begin{eqnarray*}
d_\infty(a,q)=q_2-a_2=p_2-a_2\leq d_\infty(a,p).
\end{eqnarray*}

So, in all cases $d_{\infty}(a,q)\leq d_{\infty}(a,p)$, as claimed.

Two-Sectors Case:

Let us assume that two sectors of $p$ intersect $A$. Since $A$ is connected, these two sectors can not be opposite sectors by Lemma~\ref{at4}.
So, without loss of generality we can assume that the sectors are $S_1^+(p)$ and $S_2^+(p)$. First note that the line $\{(p_1+t,p_2-t)|\ t\in {\mathbb{R}}\}$ can not intersect the set $A$,
 since otherwise $A$ would intersect at least three sectors of $p$. Now imagine that we move this line along the ray $I^{++}(p)$ towards $A$ (i.e. consider the lines $I^{+-}(p_1+t,p_2+t)\cup I^{-+}(p_1+t,p_2+t)$ for $t\geq 0$).
 Let $t_0$ be the supremum of the parameters $t$, for which the corresponding lines do not intersect $A$. Let $q=(q_1,q_2)=(p_1+t_0,p_2+t_0)$ (see Fig.~\ref{fat7}).

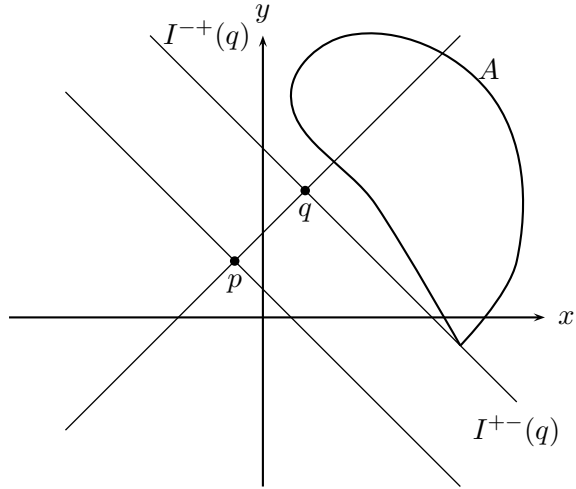
\begin{figure}[h]
\begin{center}
\begin{pspicture*}(-5,-3)(5,5.5)
\psset{unit=0.75cm}
\pscurve(3,-0.5)(4,1)(3.5,4)(1,5)(0,4)(1.5,2)(3,-0.5)
\psline{->}(-5,0)(4.5,0)
\psline{->}(-0.5,-3)(-0.5,5)
\psline[linewidth=0.5pt](-4,-2)(-1,1)
\psline[linewidth=0.5pt](-1,1)(3,5)
\psline[linewidth=0.5pt](-4,4)(3,-3)
\uput[r](4.5,0){$x$}
\uput[u](-0.5,5){$y$}
\psdot(-1,1)
\psdot(0.25,2.25)
\uput[d](-1,1){$p$}
\uput[d](0.25,2.25){$q$}
\uput[u](3.5,4){$A$}
\psline[linewidth=0.5pt](-2.5,5)(4,-1.5)
\uput[d](4,-1.5){$I^{+-}(q)$}
\uput[r](-2.5,5){$I^{-+}(q)$}
\end{pspicture*}
\caption{Two-sectors case} \label{fat7}
\end{center}
\end{figure}

   Now we define $r:A\cup\{p\}\rightarrow A\cup\{q\}$,

 \begin{equation*}
 r(x)=\left\{\begin{array}{cll}
 x &,& x\in A\\
 q &,& x=p\\
 \end{array}\right.
 \end{equation*}
 The function $r$ is nonexpansive. To see this, let $a=(a_1,a_2)\in A$. The point $a$ can belong to $S_1^+(q)$ or $S_2^+(q)$.

 If $a\in S_1^+(q)$; then

 \begin{eqnarray*}
 d_\infty(a,q)=a_1-q_1=a_1-(p_1+t_0)\leq a_1-p_1=d_\infty(a,p).
 \end{eqnarray*}

If $a\in S_2^+(q)$; then

 \begin{eqnarray*}
 d_\infty(a,q)=a_2-q_2=a_2-(p_2+t_0)\leq a_2-p_2=d_\infty(a,p).
 \end{eqnarray*}

 So we have $d_\infty(a,q)\leq d_\infty(a,p)$, as claimed.

 Now if $q\in A$, then $r:A\cup\{p\}\rightarrow A$ is a nonexpansive retraction and we are done.

 If $q\notin A$, then there are two possibilities. Either the line $I^{-+}(q)\cup I^{+-}(q)$ intersects $A$ or it does not intersect $A$. If it intersects $A$,
 it can not intersect both of the rays $I^{+-}(q)$ and $I^{-+}(q)$. Because otherwise the unique geodesic between two such points would contain $q$ and thus $q$
 would belong to $A$. So assume without loss of generality that $A$ intersects $I^{+-}(q)$  (see Fig.~\ref{fat7}). Now consider the ray $\{(q_1+t,q_2)|\ t>0\}$ and denote
 its first intersection point with $A$ by $q'$. The set $A$ intersects three sectors of $q$ and by the proof of the first case we have a nonexpansive retraction
 $r':A\cup\{q\}\rightarrow A$ mapping $q\mapsto q'$. Combining the two nonexpansive retractions we get a retraction $r'\circ r:A\cup\{p\}\rightarrow A$.
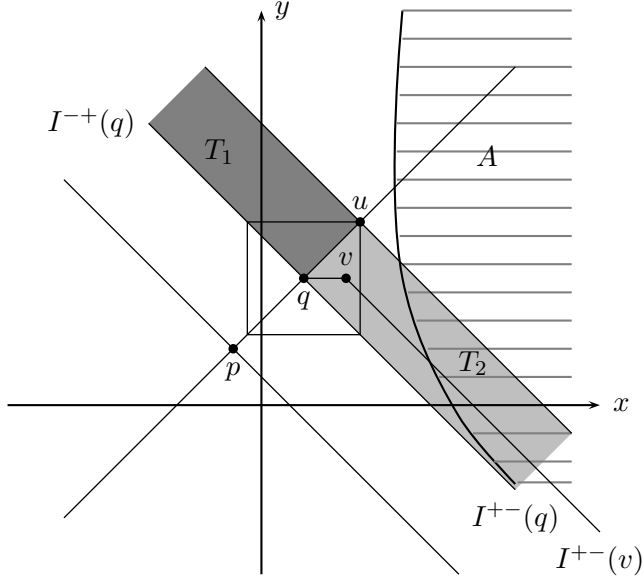
\begin{figure}[h]
 \begin{center}
\begin{pspicture*}(-4,-3)(6,7)
\psset{unit=0.75cm}
\pspolygon*[linecolor=gray](-1.5,6)(1.25,3.25)(0.25,2.25)(-2.5,5)
\pspolygon*[linecolor=lightgray](1.25,3.25)(5,-0.5)(4,-1.5)(0.25,2.25)
\pscurve(2,7)(2,2.25)(3,-0.2)(4,-1.4)
\psline[linecolor=gray](2,7)(5,7)
\psline[linecolor=gray](2,6.5)(5,6.5)
\psline[linecolor=gray](1.95,6)(5,6)
\psline[linecolor=gray](1.95,5.5)(5,5.5)
\psline[linecolor=gray](1.9,5)(5,5)
\psline[linecolor=gray](1.9,4.5)(5,4.5)
\psline[linecolor=gray](1.9,4)(5,4)
\psline[linecolor=gray](1.9,3.5)(5,3.5)
\psline[linecolor=gray](1.95,3)(5,3)
\psline[linecolor=gray](2,2.5)(5,2.5)
\psline[linecolor=gray](2.1,2)(5,2)
\psline[linecolor=gray](2.25,1.5)(5,1.5)
\psline[linecolor=gray](2.45,1)(5,1)
\psline[linecolor=gray](2.65,0.5)(5,0.5)
\psline[linecolor=gray](3.25,-0.5)(5,-0.5)
\psline[linecolor=gray](3.62,-1)(5,-1)
\psline[linecolor=gray](4.04,-1.37)(5,-1.37)
\psline{->}(-5,0)(5.5,0) \psline{->}(-0.5,-3)(-0.5,7)
\psline[linewidth=0.5pt](-4,-2)(-1,1)
\psline[linewidth=0.5pt](-1,1)(4,6)
\psline[linewidth=0.5pt](-4,4)(3,-3) \uput[r](5.5,0){$x$}
\uput[r](-0.5,7){$y$} \psdot(-1,1) \psdot(0.25,2.25)
\uput[d](-1,1){$p$} \uput[d](0.25,2.25){$q$}
\psline[linewidth=0.5pt](-0.75,1.25)(1.25,1.25)(1.25,3.25)(-0.75,3.25)(-0.75,1.25)
\psdot(1.25,3.25) \uput[u](1.25,3.25){$u$} \psdot(1,2.25)
\uput[u](1,2.25){$v$} \psline[linewidth=0.5pt](0.25,2.25)(1,2.25)
\psline[linewidth=0.5pt](1,2.25)(5.5,-2.25)
\uput[d](5.5,-2.25){$I^{+-}(v)$} \uput[u](3.25,0.3){$T_2$}
\uput[l](-0.75,4.5){$T_1$}
\psline[linewidth=0.5pt](-1.5,6)(5,-0.5) \uput[u](3.5,4){$A$}
\psline[linewidth=0.5pt](-2.5,5)(4,-1.5)
\uput[d](4,-1.5){$I^{+-}(q)$} \uput[l](-2.5,5){$I^{-+}(q)$}
\end{pspicture*}
\caption{Two-sectors sub-case} \label{fat8}
\end{center}
\end{figure}

 We now consider the case where the line $I^{-+}(q)\cup I^{+-}(q)$ does not intersect $A$ (see Fig.~\ref{fat8}).
 Since $A$ is closed there is a closed ball $\bar{B}(q,\varepsilon_0)$ not intersecting $A$. Let $u=(q_1+\varepsilon_0,q_2+\varepsilon_0)$ and consider the stripe
 $T_1$ bounded by the two rays $I^{-+}(q)$, $I^{-+}(u)$ and
 the segment $[qu]$ and the stripe $T_2$ bounded by the rays $I^{+-}(q)$, $I^{+-}(u)$ and the segment $[qu]$. The set $A$ intersects one and only one of these stripes.
  It intersects one of them by the definition $q$ and it can not intersect both of them, because otherwise a geodesic between two such points would intersect
  the segment $[qu]$, contradicting the choice of $u$, and we can assume without loss of generality that $A$ intersects the stripe $T_2$. Now take any point $v=(q_1+\varepsilon,q_2)$
  inside $\bar{B}(q,\varepsilon_0)$ such that the ray $I^{+-}(v)$ intersects $A$.
 We are now in the position of the three-sectors case with respect to the point $v$. Now combining the nonexpansive functions $r:A\cup\{p\}\rightarrow A\cup\{q\}$,
  $r_1:A\cup\{q\}\rightarrow A\cup\{v\}$ and $r_2:A\cup\{v\}\rightarrow A$, we get a nonexpansive retraction $r_2\circ r_1\circ r:A\cup\{p\}\rightarrow A$.

 One-Sector Case:

 Now we consider the final case, where only one sector of $p$ intersects $A$ and we can assume this sector to be $S_1^+(p)$ (In fact, A must then lie in the interior of this sector).
 Imagine that we move the ''right elbow'' $I^{++}(p)\cup I^{+-}(p)$ of $p$ horizontally to the right, i.e. we consider the elbows $I^{++}(p_1+t,p_2)\cup I^{+-}(p_1+t,p_2)$ for $t\geq0$. Let $t_0$ denote the
 supremum of the parameters $t$, for which the corresponding elbows do not intersect $A$. Denote $q=(p_1+t_0,p_2)$. If $q\in A$, then we are done, since we get a nonexpansive retraction $r:A\cup\{p\}\rightarrow A$,
  sending $p$ to $q$. If $q\notin A$, but the elbow $I^{++}(q)\cup I^{+-}(q)$ of $q$ intersects $A$, then we are in a position of two-sectors case or three-sectors case with respect to $q$ and
   combining the nonexpansive functions $r:A\cup\{p\}\rightarrow A\cup\{q\}$ (sending $p$ to $q$) and $r':A\cup\{q\}\rightarrow A$, we get a nonexpansive retraction $r'\circ r:A\cup\{p\}\rightarrow A$ and we are done.

\begin{figure}[h]
\begin{center}
\begin{pspicture*}(-5,-2)(6,7.5)
\psset{unit=0.85cm}
\pscurve(4.1,6)(3.25,5)(2.5,4)(2,2.5)(3,2.75)(4,2.5)(5.5,3)
\psline[linecolor=lightgray](4.14,6)(5.5,6)
\psline[linecolor=lightgray](3.7,5.5)(5.5,5.5)
\psline[linecolor=lightgray](3.3,5)(5.5,5)
\psline[linecolor=lightgray](2.85,4.5)(5.5,4.5)
\psline[linecolor=lightgray](2.5,4)(5.5,4)
\psline[linecolor=lightgray](2.2,3.5)(5.5,3.5)
\psline[linecolor=lightgray](2,3)(5.5,3)
\pspolygon*[linecolor=lightgray](1,1.5)(1,3)(-0.5,3)(-0.5,1.5)(1,1.5)
\psline{->}(-5,0)(5.5,0)
\psline{->}(0,-2)(0,7) \psline[linewidth=0.5pt](0.25,2.25)(4,6)
\uput[r](5.5,0){$x$} \uput[u](0,7){$y$} \psdot(0.25,2.25)
\uput[d](0.25,2.25){$q$}
\psline[linewidth=0.5pt](-1.75,2.25)(0.75,2.25) \psdot(0.75,2.25)
\uput[d](0.75,2.25){$v$}
\psline[linewidth=0.5pt](0.75,2.25)(4.5,6)
\psline[linewidth=0.5pt](0.75,2.25)(4.5,-1.5)
\psline[linewidth=0.5pt](1,1.5)(1,3)(-0.5,3)(-0.5,1.5)(1,1.5)
\uput[u](4.5,4){$A$} \psline[linewidth=0.5pt](0.25,2.25)(4,-1.5)
\psdot(-1.75,2.25) \psline[linewidth=0.5pt](-5,-1)(2.5,6.5)
\psline[linewidth=0.5pt](-5,5.5)(2.5,-2) \uput[d](-1.75,2.25){$p$}
\uput[r](-5,-1){$I^{--}(p)$} \uput[r](-4.5,5){$I^{-+}(p)$}
\uput[r](2,6){$I^{++}(p)$} \uput[r](2,-1.5){$I^{+-}(p)$}
\end{pspicture*}
\caption{One-sector case} \label{fat10}
\end{center}
\end{figure}
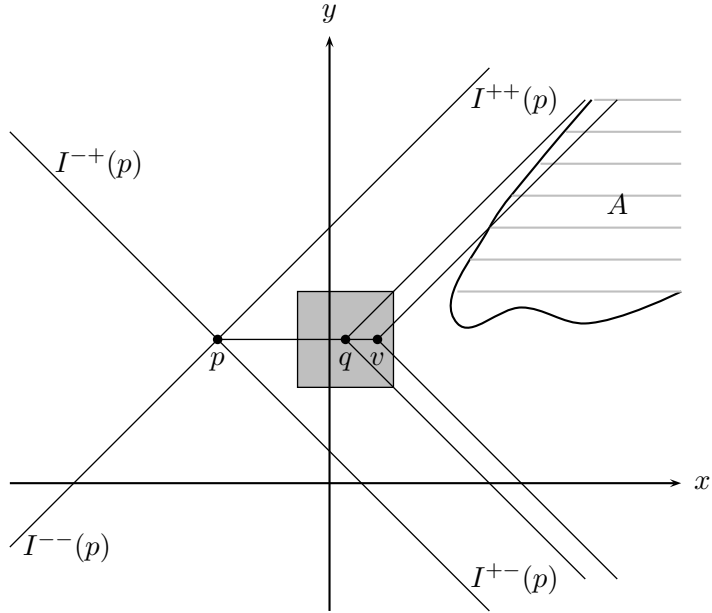

     Now we consider the case where the elbow $I^{++}(q)\cup I^{+-}(q)$ of $q$ does not intersect $A$
     (see Fig.~\ref{fat10}). As $q\notin A$ and $A$ is closed, there is a ball $\bar{B}(q,\varepsilon)$ not intersecting $A$.
     Choose a $\delta$ with $0<\delta<\varepsilon$ such that the
     right elbow $I^{++}(v)\cup I^{+-}(v)$ of the point
     $v=(q_1+\delta,q_2)$ intersects $A$. The map $r':A\cup\{q\}\rightarrow
     A\cup\{v\}$, sending $q$ to $v$ and fixing the points of
     $A$ is obviously a nonexpansive function.

     Since the set $A$ intersects two or three sectors of $v$, we can construct a nonexpansive retraction $r'':A\cup\{v\}\rightarrow A$ and combining $r$, $r'$ and $r''$, we get a nonexpansive
     retraction $r''\circ r'\circ r:A\cup\{p\}\rightarrow A$.
\end{proof}

\section{Some Examples}

Using Theorem~\ref{at} one can produce easily many examples of
tight spans of (finite or infinite) subspaces of the plane
$\mathbb{R}_{\infty}^2$. We give below several examples.

\begin{example}
Tight span of a three-point metric space $X=\{P_1,P_2,P_3\}$ with
$d(P_2,P_3)=a$, $d(P_1,P_3)=b$ and $d(P_1,P_2)=c$. Assume without
loss of generality $a\geq b$. Note that this space can be embedded
into $\mathbb{R}_{\infty}^2$, e.g. as in Fig.~\ref{fat11} (the
point $p_i$ being the image of $P_i$ under this embedding).

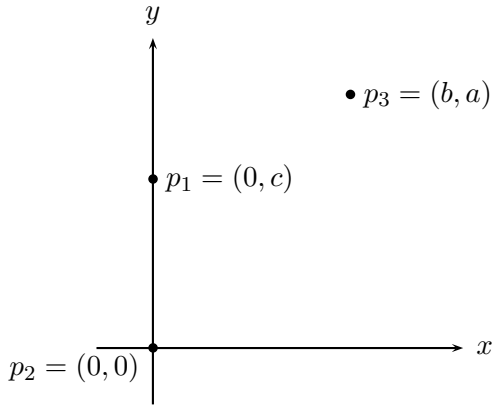
\begin{figure}[h!]
\begin{center}
\begin{pspicture}(-1,-1)(6,6)
\psset{unit=0.75}
\psline{->}(-1,0)(5.5,0) \psline{->}(0,-1)(0,5.5)
\uput[r](5.5,0){$x$} \uput[u](0,5.5){$y$}
\psdots(0,0)(0,3)(3.5,4.5)
\uput[l](0,-0.35){$p_2=(0,0)$}
\uput[r](0,3){$p_1=(0,c)$}
\uput[r](3.5,4.5){$p_3=(b,a)$}
\end{pspicture}
\caption{Embedding of a three-point metric space with side-lengths
$a, b, c$ in $\mathbb{R}^2_{\infty}$} \label{fat11}
\end{center}
\end{figure}

Now consider the set $T\subseteq \mathbb{R}_{\infty}^2$ in
Fig.~\ref{fat12} (where the bold segments are parallel to the
diagonals).

\begin{figure}[h!]
\vskip-20pt
\begin{center}
\begin{pspicture}(-1,-1)(6,6)
\psset{unit=0.9}
\psline{->}(-1,0)(5.5,0) \psline{->}(0,-1)(0,5.5)
\uput[r](5.5,0){$x$} \uput[u](0,5.5){$y$}
\psdots(0,0)(0,3)(3.5,4.5)(1,2)
\psline[linewidth=1.5pt](0,0)(1.5,1.5)(0,3)
\psline[linewidth=1.5pt](3.5,4.5)(1,2) \uput[l](0,-0.35){$p_2=(0,0)$}
\uput[l](0,3){$p_1=(0,c)$} \uput[r](3.5,4.5){$p_3=(b,a)$}
\uput[r](1.1,2.1){$q=(\frac{b+c-a}{2},\frac{a+c-b}{2})$}
\uput[u](2,4){$T$}
\end{pspicture}
\vskip-10pt \caption{Tight span of the three-point metric space in
Fig.\ref{fat11} inside $\mathbb{R}^2_{\infty}$} \label{fat12}
\end{center}
\end{figure}
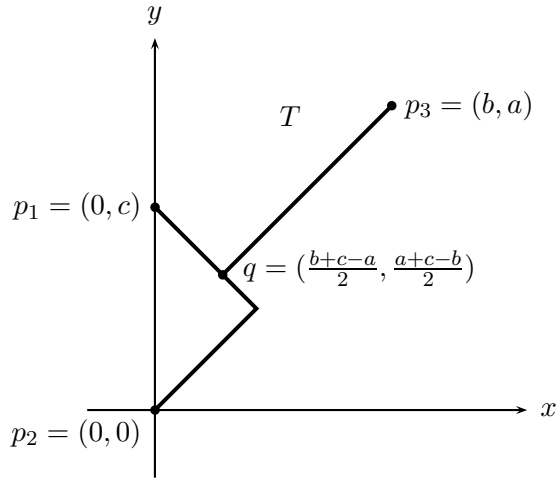

T is a closed, geodesically convex set containing $X$ and minimal
with these properties. So, it is isometric to the tight span of
$X$.

Other realizations of the tight span of $X$ could be, for example
the set $S$ in Fig.~\ref{fat13}, or $U$ in Fig.~\ref{fat14},
since $S$ and $U$ are also closed, geodesically convex and minimal
sets containing $X$. Obviously $T$, $S$ and $U$ are isometric
spaces (as they should be, since they are tight spans of $X$).

\begin{figure}[h!]
\begin{center}
\begin{pspicture}(-1,-1)(6,6)
\psset{unit=0.75}
\psline{->}(-1,0)(5.5,0) \psline{->}(0,-1)(0,5.5)
\uput[r](5.5,0){$x$} \uput[u](0,5.5){$y$}
\psdots(0,0)(0,3)(3.5,4.5)(1,2)
\psline[linewidth=1.5pt](0,0)(1,2)(0,3)
\psline[linewidth=1.5pt](3.5,4.5)(1,2) \uput[l](0,-0.35){$p_2=(0,0)$}
\uput[l](0,3){$p_1=(0,c)$} \uput[r](3.5,4.5){$p_3=(b,a)$}
\uput[r](1,2){$q=(\frac{b+c-a}{2},\frac{a+c-b}{2})$}
\uput[u](2,4){$S$}
\end{pspicture}
\caption{Another (isometric) tight span of the same three-point
metric space inside $\mathbb{R}^2_{\infty}$} \label{fat13}
\end{center}
\end{figure}
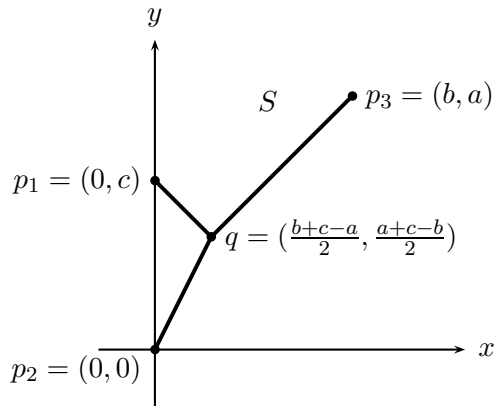

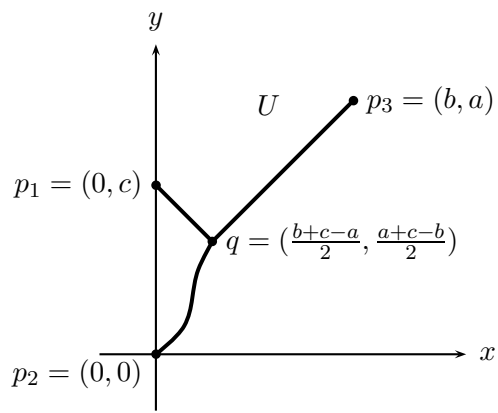
\begin{figure}[h!]
\begin{center}
\begin{pspicture}(-1,-1)(6,6)
\psset{unit=0.75}
\psline{->}(-1,0)(5.5,0) \psline{->}(0,-1)(0,5.5)
\uput[r](5.5,0){$x$} \uput[u](0,5.5){$y$}
\psdots(0,0)(0,3)(3.5,4.5)(1,2) \psline[linewidth=1.5pt](1,2)(0,3)
\psline[linewidth=1.5pt](3.5,4.5)(1,2)
\pscurve[linewidth=1.5pt](0,0)(0.5,0.5)(0.75,1.5)(1,2)
\uput[l](0,-0.35){$p_2=(0,0)$} \uput[l](0,3){$p_1=(0,c)$}
\uput[r](3.5,4.5){$p_3=(b,a)$}
\uput[r](1,2){$q=(\frac{b+c-a}{2},\frac{a+c-b}{2})$}
\uput[u](2,4){$U$}
\end{pspicture}
\caption{Still another (isometric) tight span of the same
three-point metric space inside $\mathbb{R}^2_{\infty}$}
\label{fat14}
\end{center}
\end{figure}

\end{example}

\clearpage

\begin{example}
Tight span of a four point metric space $X=\{P_1,P_2,P_3,P_4\}$ with distances as shown in Fig.~\ref{fat15}.
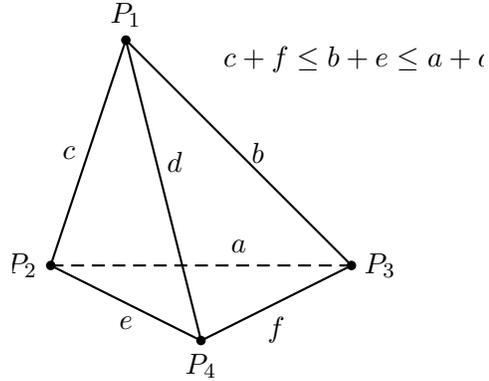
\begin{figure}[h]
\begin{center}
\psset{unit=0.5}
\begin{pspicture*}(-2,-2)(10.5,8)
\psdots(1,7)(-1,1)(3,-1)(7,1) \uput[u](1,7){$P_1$}
\uput[l](-1,1){$P_2$} \uput[r](7,1){$P_3$} \uput[d](3,-1){$P_4$}
\psline(1,7)(-1,1)(3,-1)(1,7) \psline(1,7)(7,1)(3,-1)
\psline[linestyle=dashed](-1,1)(7,1) \uput[l](0,4){$c$}
\uput[r](1.75,3.75){$d$} \uput[r](4,4){$b$} \uput[u](4,1){$a$}
\uput[d](1,0){$e$} \uput[d](5,0){$f$} \uput[r](3.25,6.5){$c+f\leq
b+e\leq a+d$}
\end{pspicture*}
\caption{A four-point metric space} \label{fat15}
\end{center}
\end{figure}

 We can always arrange (by renaming the points) that the inequalities \linebreak \mbox{$c+f\leq b+e\leq a+d$} hold. This space can be embedded into $\mathbb{R}_{\infty}^2$, e.g. as in Fig.~\ref{fat15.5}.

\begin{figure}[h!]
\vskip-10pt
\begin{center}
\begin{pspicture}(-3,-1)(8,8)
\psset{unit=0.75}
\psline{->}(-2,0)(7.5,0) \psline{->}(0,-1)(0,7.5)
\uput[r](7.5,0){$x$} \uput[u](0,7.5){$y$}
\psdots(0,0)(-1.5,4.5)(4.5,6.5)(5.75,3.25)
\uput[l](0,-0.35){$p_2=(0,0)$}
\uput[l](-1.5,4.5){$p_1=(e-d,c)$}
\uput[r](4.5,6.5){$p_3=(b+e-d,a)$}
\uput[r](5.75,3.25){$p_4=(e,a-f)$}
\end{pspicture}
\caption{Embedding of the four-point metric space in
Fig.\ref{fat15} in  $\mathbb{R}^2_{\infty}$} \label{fat15.5}
\end{center}
\end{figure}
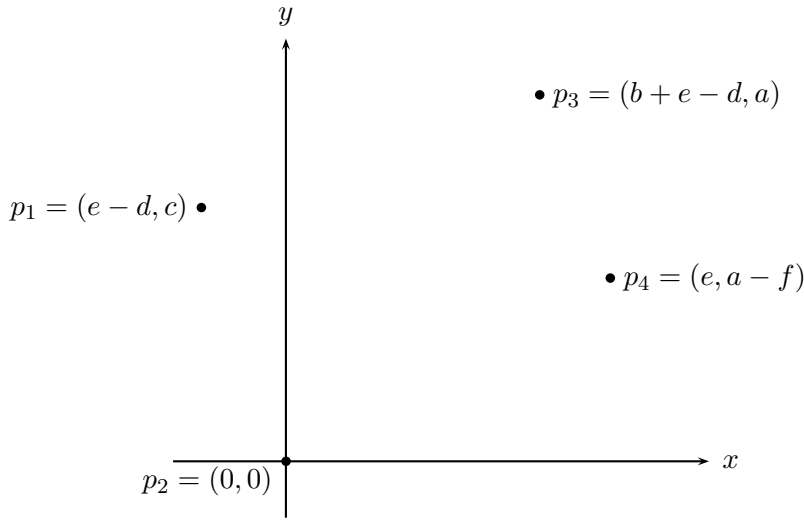

Now consider the set $T\subseteq \mathbb{R}_{\infty}^2$ in
Fig.~\ref{fat16} (where the bold segments are parallel to the
diagonals).

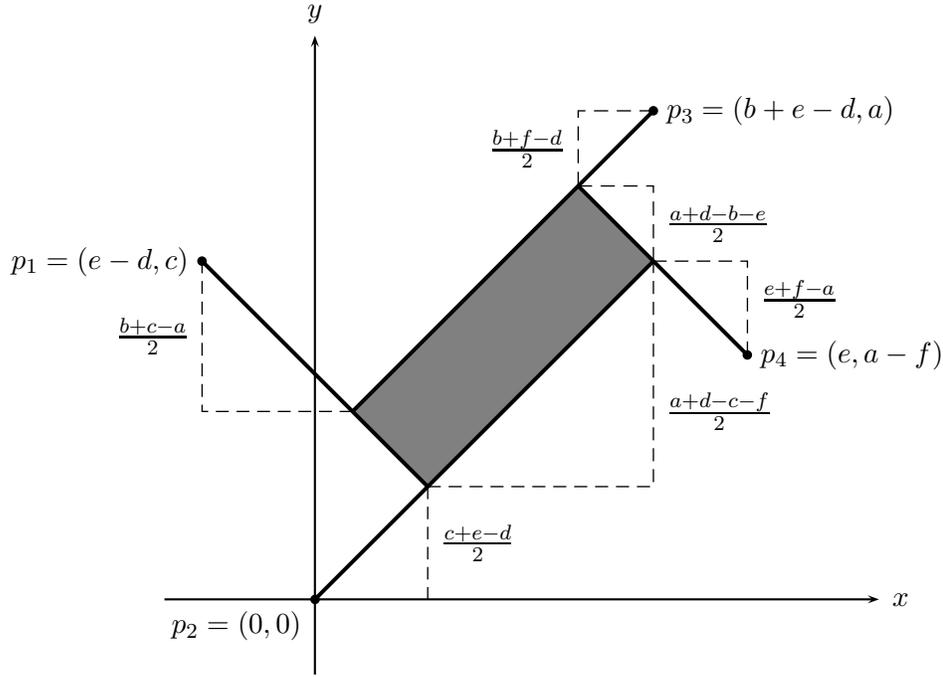
\begin{figure}[h!]
\begin{center}
\begin{pspicture}(-3,-1)(8,8)
\pspolygon*[linecolor=gray](1.5,1.5)(4.5,4.5)(3.5,5.5)(0.5,2.5)
\psline{->}(-2,0)(7.5,0) \psline{->}(0,-1)(0,7.5)
\uput[r](7.5,0){$x$} \uput[u](0,7.5){$y$}
\psdots(0,0)(-1.5,4.5)(4.5,6.5)(5.75,3.25)
\psline[linewidth=1.5pt](0,0)(1.5,1.5)(-1.5,4.5)
\psline[linewidth=1.5pt](0.5,2.5)(4.5,6.5)
\psline[linewidth=1.5pt](3.5,5.5)(5.75,3.25)
\psline[linewidth=1.5pt](4.5,4.5)(1.5,1.5)
\uput[l](0,-0.35){$p_2=(0,0)$}
\uput[l](-1.5,4.5){$p_1=(e-d,c)$}
\uput[r](4.5,6.5){$p_3=(b+e-d,a)$}
\uput[r](5.75,3.25){$p_4=(e,a-f)$}
\psline[linestyle=dashed, linewidth=0.5pt](1.5,1.5)(1.5,0)
\uput[r](1.5,0.75){$\frac{c+e-d}{2}$}
\psline[linestyle=dashed, linewidth=0.5pt](1.5,1.5)(4.5,1.5)(4.5,4.5)
\uput[r](4.5,2.5){$\frac{a+d-c-f}{2}$}
\psline[linestyle=dashed, linewidth=0.5pt](5.75,3.25)(5.75,4.5)(4.5,4.5)
\uput[r](5.75,4){$\frac{e+f-a}{2}$}
\psline[linestyle=dashed, linewidth=0.5pt](4.5,4.5)(4.5,5.5)(3.5,5.5)
\uput[r](4.5,5){$\frac{a+d-b-e}{2}$}
\psline[linestyle=dashed, linewidth=0.5pt](3.5,5.5)(3.5,6.5)(4.5,6.5)
\uput[l](3.5,6){$\frac{b+f-d}{2}$}
\psline[linestyle=dashed, linewidth=0.5pt](0.5,2.5)(-1.5,2.5)(-1.5,4.5)
\uput[l](-1.5,3.5){$\frac{b+c-a}{2}$}
\end{pspicture}
\caption{Tight span of the four-point metric space in
Fig.\ref{fat15.5} inside  $\mathbb{R}^2_{\infty}$} \label{fat16}
\end{center}
\end{figure}
T is a closed, geodesically convex set containing $X$ and minimal with these properties. So, it is isometric to the tight span of $X$.
\end{example}

\begin{example}
In this sample of examples we show the tight spans of some
infinite subsets of $\mathbb{R}_{\infty}^2$ (see the Figs.
\ref{fat17}-\ref{fat22}). In each case, by Theorem~\ref{at}, it is
enough to see that the corresponding spaces $T_i$ are closed,
geodesically convex subspaces containing the given spaces $X_i$
and minimal with these properties. During checking these
properties one should bear in mind that for two points on a line
parallel to a diagonal, there exists a unique geodesic between
these points and it is the segment between these points.

\begin{figure}[h!]
\begin{center}
\begin{pspicture}(-0.5,-5)(11.5,3)
\psline{->}(-0.5,0)(2.5,0) \psline{->}(0,-0.5)(0,2.5)
\uput[u](2.5,0){$x$} \uput[u](0,2.5){$y$}
\psarc[linewidth=1.5pt](0,0){2}{0}{90} \uput[u](1.65,1.3){$X_1$}
\psline{->}(3.5,0)(7,0) \psline{->}(4.5,-0.5)(4.5,2.5)
\uput[u](7,0){$x$} \uput[u](4.5,2.5){$y$}
\psarc[linewidth=1.5pt](4.5,0){2}{20}{110} \uput[u](6,1.4){$X_2$}
\psline{->}(8,0)(11.5,0) \psline{->}(9.5,-0.5)(9.5,2.5)
\uput[u](11.5,0){$x$} \uput[u](9.5,2.5){$y$}
\psarc[linewidth=1.5pt](9.5,0){2}{45}{135}
\uput[u](10.75,1.6){$X_3$}

\psline{->}(-0.5,-4.5)(2.5,-4.5) \psline{->}(0,-5)(0,-2)
\uput[u](2.5,-4.5){$x$} \uput[u](0,-2){$y$}
\psarc*[linecolor=gray](0,-4.5){2}{0}{90} \psarc(0,-4.5){2}{0}{90}
\psline(0,-2.5)(2,-4.5) \uput[u](1.65,-3.2){$T_1$}
\psline{->}(3.5,-4.5)(7,-4.5) \psline{->}(4.5,-5)(4.5,-2)
\uput[u](7,-4.5){$x$} \uput[u](4.5,-2){$y$}
\psarc*[linecolor=gray](4.5,-4.5){2}{20}{70}
\psarc(4.5,-4.5){2}{20}{70}
\psarc[linewidth=1.5pt](4.5,-4.5){2}{70}{110}
\psline(6.379,-3.816)(5.184,-2.62) \uput[u](6,-3.1){$T_2$}
\psline{->}(8,-4.5)(11.5,-4.5) \psline{->}(9.5,-5)(9.5,-2)
\uput[u](11.5,-4.5){$x$} \uput[u](9.5,-2){$y$}
\psarc[linewidth=1.5pt](9.5,-4.5){2}{45}{135}
\uput[u](10.75,-2.9){$T_3$}
\end{pspicture}
\vskip-10pt \caption{Three subsets $X_1, X_2$ and $X_3\subset
\mathbb{R}^2_{\infty}$ and their tight spans $T_1, T_2$ and $T_3$}
\label{fat17}
\end{center}
\end{figure}

\begin{figure}[h!]
\begin{center}
\begin{pspicture}(-1,-6)(11.5,3)
\psline{->}(-1,0.5)(2.5,0.5) \psline{->}(0.75,-1)(0.75,2.5)
\uput[u](2.5,0.5){$x$} \uput[u](0.75,2.5){$y$}
\psarc[linewidth=1.5pt](0.75,0.5){1.5}{0}{180}
\uput[u](1.65,1.8){$X_1$} \psline{->}(3.25,0.5)(7,0.5)
\psline{->}(5,-1)(5,2.5) \uput[u](7,0.5){$x$} \uput[u](5,2.5){$y$}
\psarc[linewidth=1.5pt](5,0.5){1.5}{20}{200}
\uput[u](6.25,1.6){$X_2$} \psline{->}(7.75,0.5)(11.5,0.5)
\psline{->}(9.5,-1)(9.5,2.5) \uput[u](11.5,0.5){$x$}
\uput[u](9.5,2.5){$y$}
\psarc[linewidth=1.5pt](9.5,0.5){1.5}{45}{225}
\uput[u](10.5,1.8){$X_3$}

\psline{->}(-1,-4.5)(2.5,-4.5) \psline{->}(0.75,-6)(0.75,-2.5)
\uput[u](2.5,-4.5){$x$}
\uput[u](0.75,-2.5){$y$}
\psarc*[linecolor=gray](0.75,-4.5){1.5}{0}{90}
\psarc*[linecolor=gray](0.75,-4.5){1.5}{90}{180}
\psarc(0.75,-4.5){1.5}{0}{180}
\psline(2.25,-4.5)(0.75,-3)(-0.75,-4.5)
\uput[u](1.65,-3.2){$T_1$}
\uput[u](7,-4.5){$x$}
\uput[u](5,-2.5){$y$}
\psarc*[linecolor=gray](5,-4.5){1.5}{20}{70}
\psarc*[linecolor=gray](5,-4.5){1.5}{70}{200}
\psarc(5,-4.5){1.5}{20}{200}
\psline(6.41,-3.986)(5.513,-3.09)(3.59,-5.013)
\uput[u](6.25,-3.4){$T_2$}
\psline{->}(3.25,-4.5)(7,-4.5) \psline{->}(5,-6)(5,-2.5)
\uput[u](11.5,-4.5){$x$}
\uput[u](9.5,-2.5){$y$}
\psarc*[linecolor=gray](9.5,-4.5){1.5}{45}{225}
\psarc(9.5,-4.5){1.5}{45}{225}
\psline(10.56,-3.439)(8.439,-5.56)
\uput[u](10.5,-3.2){$T_3$}
\psline{->}(7.75,-4.5)(11.5,-4.5) \psline{->}(9.5,-6)(9.5,-2.5)
\end{pspicture}
\vskip-10pt \caption{Three subsets $X_1, X_2$ and $X_3\subset
\mathbb{R}^2_{\infty}$ and their tight spans $T_1, T_2$ and $T_3$}
\label{fat18}
\end{center}
\end{figure}

\begin{figure}[h!]
\begin{center}
\begin{pspicture}(-1,-6)(11.5,3)
\psset{unit=0.9} \psline{->}(-1,0.5)(2.5,0.5)
\psline{->}(0.75,-1.25)(0.75,2.5) \uput[u](2.5,0.5){$x$}
\uput[u](0.75,2.5){$y$}
\psarc[linewidth=1.5pt](0.75,0.5){1.5}{0}{270}
\uput[u](1.65,1.8){$X_1$} \psline{->}(3.25,0.5)(7,0.5)
\psline{->}(5,-1.25)(5,2.5) \uput[u](7,0.5){$x$}
\uput[u](5,2.5){$y$} \psarc[linewidth=1.5pt](5,0.5){1.5}{20}{290}
\uput[u](6.1,1.65){$X_2$} \psline{->}(7.75,0.5)(11.5,0.5)
\psline{->}(9.5,-1.25)(9.5,2.5) \uput[u](11.5,0.5){$x$}
\uput[u](9.5,2.5){$y$}
\psarc[linewidth=1.5pt](9.5,0.5){1.5}{45}{315}
\uput[u](10.75,1.6){$X_3$}

\uput[u](2.5,-4.25){$x$}
\uput[l](0.75,-2.25){$y$}
\psarc*[linecolor=gray](0.75,-4.25){1.5}{0}{270}
\psarc(0.75,-4.25){1.5}{0}{270}
\psline(2.25,-4.25)(0.75,-5.75)
\uput[u](1.65,-3){$T_1$}
\psline{->}(-1,-4.25)(2.5,-4.25) \psline{->}(0.75,-6)(0.75,-2.25)
\uput[u](7,-4.25){$x$}
\uput[l](5,-2.25){$y$}
\psarc*[linecolor=gray](5,-4.25){1.5}{20}{250}
\psarc*[linecolor=gray](5,-4.25){1.5}{160}{290}
\psarc(5,-4.25){1.5}{20}{290}
\psline(6.41,-3.737)(5,-5.147)(5.513,-5.66)
\uput[u](6.1,-3.1){$T_2$}
\psline{->}(3.25,-4.25)(7,-4.25) \psline{->}(5,-6)(5,-2.25)
\uput[u](11.5,-4.25){$x$}
\uput[l](9.5,-2.25){$y$}
\psarc*[linecolor=gray](9.5,-4.25){1.5}{45}{225}
\psarc*[linecolor=gray](9.5,-4.25){1.5}{135}{315}
\psarc(9.5,-4.25){1.5}{45}{315}
\psline(10.56,-3.189)(9.5,-4.25)(10.56,-5.31)
\uput[u](10.65,-3.1){$T_3$}
\psline{->}(7.75,-4.25)(11.5,-4.25) \psline{->}(9.5,-6)(9.5,-2.25)
\end{pspicture}
\vskip-25pt \caption{Three subsets $X_1, X_2$ and $X_3\subset
\mathbb{R}^2_{\infty}$ and their tight spans $T_1, T_2$ and $T_3$}
\label{fat19}
\end{center}
\end{figure}

\begin{figure}[h!]
\begin{center}
\begin{pspicture}(-1,-6)(11.5,3)
\psset{unit=0.9}
\psline{->}(-1,0.5)(2.5,0.5) \psline{->}(0.75,-1.25)(0.75,2.5)
\uput[u](2.5,0.5){$x$}
\uput[u](0.75,2.5){$y$}
\psline[linewidth=1.5pt](1.85,1.6)(-0.35,-0.6)
\psline[linewidth=1.5pt](1.85,-0.6)(-0.35,1.6)
\uput[u](1.65,1.65){$X_1$}
\psline{->}(3.25,0.5)(7,0.5) \psline{->}(5,-1.25)(5,2.5)
\uput[u](7,0.5){$x$}
\uput[u](5,2.5){$y$}
\psline[linewidth=1.5pt](5.75,2)(4.25,-1)
\psline[linewidth=1.5pt](3.5,1.25)(6.5,-0.25)
\uput[u](6.25,1.6){$X_2$}
\psline{->}(7.75,0.5)(11.5,0.5) \psline{->}(9.5,-1.25)(9.5,2.5)
\uput[u](11.5,0.5){$x$}
\uput[u](9.5,2.5){$y$}
\psline[linewidth=1.5pt](9.5,2)(9.5,-1)
\psline[linewidth=1.5pt](11,0.5)(8,0.5)
\uput[u](10,1.3){$X_3$}

\psline{->}(-1,-4.25)(2.5,-4.25) \psline{->}(0.75,-6)(0.75,-2.25)
\uput[u](2.5,-4.25){$x$}
\uput[l](0.75,-2.25){$y$}
\psline[linewidth=1.5pt](1.85,-3.15)(-0.35,-5.35)
\psline[linewidth=1.5pt](1.85,-5.35)(-0.35,-3.15)
\uput[u](1.65,-3){$T_1$}

\psline(5.75,-2.75)(4.5,-4)(3.5,-3.5)(4.75,-4.75)(4.25,-5.75)(5.5,-4.5)(6.5,-5)(5.25,-3.75)(5.75,-2.75)
\pspolygon*[linecolor=gray](5.75,-2.75)(4.5,-4)(3.5,-3.5)(4.75,-4.75)(4.25,-5.75)(5.5,-4.5)(6.5,-5)(5.25,-3.75)(5.75,-2.75)
\psline(5.75,-2.75)(4.5,-4)(3.5,-3.5)(4.75,-4.75)(4.25,-5.75)(5.5,-4.5)(6.5,-5)(5.25,-3.75)(5.75,-2.75)
\psline{->}(3.25,-4.25)(7,-4.25) \psline{->}(5,-6)(5,-2.25)
\uput[u](7,-4.25){$x$}
\uput[l](5,-2.25){$y$}
\uput[u](6.25,-3.15){$T_2$}

\uput[u](11.5,-4.25){$x$}
\uput[l](9.5,-2.25){$y$}
\pspolygon*[linecolor=gray](11,-4.25)(9.5,-2.75)(8,-4.25)(9.5,-5.75)
\psline(11,-4.25)(9.5,-2.75)(8,-4.25)(9.5,-5.75)(11,-4.25)
\psline{->}(7.75,-4.25)(11.5,-4.25) \psline{->}(9.5,-6)(9.5,-2.25)
\uput[u](10.25,-3.25){$T_3$}
\end{pspicture}
\vskip-20pt \caption{Three subsets $X_1, X_2$ and $X_3\subset
\mathbb{R}^2_{\infty}$ and their tight spans $T_1, T_2$ and $T_3$}
\label{fat20}
\end{center}
\end{figure}

\begin{figure}[h!]
\begin{center}
\psset{unit=0.9}
\begin{pspicture}(-1,-6)(11.5,3)
\psset{unit=0.9}
\psline{->}(-1,0.5)(2.5,0.5) \psline{->}(0.75,-1.25)(0.75,2.5)
\uput[u](2.5,0.5){$x$}
\uput[u](0.75,2.5){$y$}
\psline[linewidth=1.5pt](2.25,0.5)(-0.75,0.5)
\psline[linewidth=1.5pt](1.5,1.8)(0,-0.8)
\psline[linewidth=1.5pt](0,1.8)(1.5,-0.8)
\uput[u](1.65,1.8){$X_1$}
\psline{->}(3.25,0.5)(7,0.5) \psline{->}(5,-1.25)(5,2.5)
\uput[u](7,0.5){$x$}
\uput[u](5,2.5){$y$}
\psline[linewidth=1.5pt](6.449,0.888)(3.55,0.11)
\psline[linewidth=1.5pt](5.388,1.949)(4.61,-0.95)
\psline[linewidth=1.5pt](3.94,1.56)(6.06,-0.56)
\uput[u](6.1,1.65){$X_2$}
\psline{->}(7.75,0.5)(11.5,0.5) \psline{->}(9.5,-1.25)(9.5,2.5)
\uput[u](11.5,0.5){$x$}
\uput[u](9.5,2.5){$y$}
\psline[linewidth=1.5pt](10.8,1.25)(8.2,-0.25)
\psline[linewidth=1.5pt](9.5,2)(9.5,-1)
\psline[linewidth=1.5pt](8.2,1.25)(10.8,-0.25)
\uput[u](10.75,1.6){$X_3$}

\uput[u](2.5,-4.25){$x$}
\uput[l](0.75,-2.25){$y$}
\pspolygon*[linecolor=gray](2.25,-4.25)(1.3,-3.3)(1.5,-2.95)(0.75,-3.616)(0,-2.95)(0.2,-3.3)(-0.75,-4.25)(0.2,-5.2)(0,-5.55)(0.75,-4.884)(1.5,-5.55)(1.3,-5.2)(2.25,-4.25)
\pspolygon(2.25,-4.25)(1.3,-3.3)(1.5,-2.95)(0.75,-3.616)(0,-2.95)(0.2,-3.3)(-0.75,-4.25)(0.2,-5.2)(0,-5.55)(0.75,-4.884)(1.5,-5.55)(1.3,-5.2)(2.25,-4.25)
\uput[u](1.65,-3){$T_1$}
\psline{->}(-1,-4.25)(2.5,-4.25)
\psline{->}(0.75,-6)(0.75,-2.25)

\psline[linewidth=1.5pt](3.94,-3.19)(6.06,-5.31)
\pspolygon*[linecolor=gray](6.449,-3.862)(5.388,-2.8)(3.55,-4.64)(4.61,-5.7)
\pspolygon(6.449,-3.862)(5.388,-2.8)(3.55,-4.64)(4.61,-5.7)
\psline{->}(3.25,-4.25)(7,-4.25) \psline{->}(5,-6)(5,-2.25)
\uput[u](7,-4.25){$x$}
\uput[l](5,-2.25){$y$}
\uput[u](6.1,-3.1){$T_2$}

\pspolygon*[linecolor=gray](10.8,-3.5)(10.45,-3.7)(9.5,-2.75)(8.55,-3.7)(8.2,-3.5)(8.95,-4.25)(8.2,-5)(8.55,-4.8)(9.5,-5.75)(10.45,-4.8)(10.8,-5)(10.05,-4.25)
\psline{->}(7.75,-4.25)(11.5,-4.25) \psline{->}(9.5,-6)(9.5,-2.25)
\uput[u](11.5,-4.25){$x$}
\uput[l](9.5,-2.25){$y$}
\pspolygon(10.8,-3.5)(10.45,-3.7)(9.5,-2.75)(8.55,-3.7)(8.2,-3.5)(8.95,-4.25)(8.2,-5)(8.55,-4.8)(9.5,-5.75)(10.45,-4.8)(10.8,-5)(10.05,-4.25)
\uput[u](10.75,-3.1){$T_3$}
\end{pspicture}
\vskip-10pt \caption{Three subsets $X_1, X_2$ and $X_3\subset
\mathbb{R}^2_{\infty}$ and their tight spans $T_1, T_2$ and $T_3$}
\label{fat21}
\end{center}
\end{figure}

\begin{figure}[h!]
\begin{center}
\begin{pspicture}(-1,-6)(11.5,3)
\psset{unit=0.9} \psline{->}(-1,0.5)(2.5,0.5)
\psline{->}(0.75,-1.25)(0.75,2.5) \uput[u](2.5,0.5){$x$}
\uput[u](0.75,2.5){$y$}
\psellipse[linewidth=1.5pt](0.75,0.5)(1,1.5)
\uput[u](1.5,1.8){$X_1$} \psline{->}(3.25,0.5)(7,0.5)
\psline{->}(5,-1.25)(5,2.5) \uput[u](7,0.5){$x$}
\uput[u](5,2.5){$y$}
\parabola[linewidth=1.5pt](3.5,2.25)(5,-1)
\uput[r](6.4,1.75){$X_2$} \psline{->}(7.75,0.5)(11.5,0.5)
\psline{->}(9.5,-1.25)(9.5,2.5) \uput[u](11.5,0.5){$x$}
\uput[u](9.5,2.5){$y$}
\pscurve[linewidth=1.5pt](9.4,1.2)(9.45,1.6)(9.15,2)(8.5,1.8)(8.25,0.5)(9,-0.65)(10,-1)
\pscurve[linewidth=1.5pt](9.4,1.2)(10,1.8)(10.6,1.9)(10.85,1.5)(9.55,-0.75)(10,-1)
\uput[u](11,1.75){$X_3$}

\uput[u](2.5,-4.25){$x$} \uput[l](0.75,-2.25){$y$}
\psellipse*[linecolor=gray](0.75,-4.25)(1,1.5)
\psellipse(0.75,-4.25)(1,1.5) \uput[u](1.5,-3){$T_1$}
\psline{->}(-1,-4.25)(2.5,-4.25) \psline{->}(0.75,-6)(0.75,-2.25)
\uput[u](7,-4.25){$x$} \uput[l](5,-2.25){$y$}
\psparabola*[linecolor=gray](3.5,-2.5)(5,-5.75)
\psparabola(3.5,-2.5)(5,-5.75) \uput[r](6.3,-3.25){$T_2$}
\psline{->}(3.25,-4.25)(7,-4.25) \psline{->}(5,-6)(5,-2.25)
\uput[u](11.5,-4.25){$x$} \uput[l](9.5,-2.25){$y$}
\pscurve*[linecolor=gray](9.4,-3.55)(9.45,-3.15)(9.15,-2.75)(8.5,-2.95)(8.25,-4.25)(9,-5.35)(9.9,-5.74)(10,-5.75)(9.98,-5.73)(9.55,-5.5)(10.85,-3.25)(10.6,-2.85)(10,-2.95)(9.4,-3.55)
\pscurve(9.4,-3.55)(9.45,-3.15)(9.15,-2.75)(8.5,-2.95)(8.25,-4.25)(9,-5.35)(9.9,-5.74)(10,-5.75)(9.98,-5.73)(9.55,-5.5)(10.85,-3.25)(10.6,-2.85)(10,-2.95)(9.4,-3.55)
\pspolygon*[linecolor=gray](10,-5.75)(9,-4.75)(8.6,-4.75)(9.5,-5.6)(10,-5.75)
\pspolygon*[linecolor=gray](9.24,-2.8)(10.24,-3.8)(9,-3.8)(9.24,-2.8)
\psline(10,-5.75)(9.5,-5.25) \psline(9.24,-2.8)(9.69,-3.25)
\uput[u](11,-3){$T_3$} \psline{->}(7.75,-4.25)(11.5,-4.25)
\psline{->}(9.5,-6)(9.5,-2.25)
\end{pspicture}
\caption{Three subsets $X_1, X_2$ and $X_3\subset
\mathbb{R}^2_{\infty}$ and their tight spans $T_1, T_2$ and $T_3$}
\label{fat22}
\end{center}
\end{figure}
\end{example}

\clearpage

\section{What about $\mathbb{R}_{\infty}^n$?}

It is tempting to hope that the Theorem~\ref{at} would hold
$\mathbb{R}_{\infty}^n$ for $n\geq3$ also. This is however
unfortunately not true as we show in Example~\ref{atex} below. The
main reason is that, Theorem~\ref{at2}, on which the
Theorem~\ref{at} is based, is not true either.

\begin{example}
The plane $L=\{(x,y,z)|\ x+y+z=0\}\subseteq\mathbb{R}_{\infty}^3$ with the induced metric is not hyperconvex. So a nonempty, closed and geodesically convex subspace of $\mathbb{R}_{\infty}^3$ need not be hyperconvex.
\end{example}

To see this, note that the discs around a point in $L$ are regular
hexagons (see Fig.~\ref{23}) and they don't satisfy the
hyperconvexity condition (see Fig.~\ref{24}).
\begin{figure}[h!]
\begin{center}
\begin{pspicture*}(-2,-0.5)(8,6.5)
\psset{unit=0.75}
\pspolygon*[linecolor=lightgray](2,0)(5,1)(6,4)(4,6)(1,5)(0,2)(2,0)
\psline(2,0)(5,1)(6,4)(4,6)(1,5)(0,2)(2,0)
\psdots(2,0)(5,1)(6,4)(4,6)(1,5)(0,2)(2,0)
\psline(0,4)(4,4)(4,0)
\psline(0,4)(2,6)(6,6)(6,2)(4,0)
\psline(6,6)(4,4)
\psline(4,0)(0,0)(0,4)
\psline[linewidth=0.5pt, linestyle=dashed](0,0)(2,2)(6,2)
\psline[linewidth=0.5pt, linestyle=dashed](2,2)(2,6)
\psdot(3,3)
\uput[d](3,3){$O$}
\psline{->}(3,3)(2.65,2.65)
\uput[d](2.55,2.75){$x$}
\psline{->}(3,3)(3.5,3)
\uput[r](3.45,3){$y$}
\psline{->}(3,3)(3,3.5)
\uput[u](3,3.45){$z$}
\uput[d](2,0){$(1,0,-1)$}
\uput[r](5,1){$(0,1,-1)$}
\uput[r](6,4){$(-1,1,0)$}
\uput[u](4,6){$(-1,0,1)$}
\uput[l](1,5){$(0,-1,1)$}
\uput[l](0,2){$(1,-1,0)$}
\end{pspicture*}
\caption{hexagonal slice of a cube}
 \label{23}
\end{center}
\end{figure}
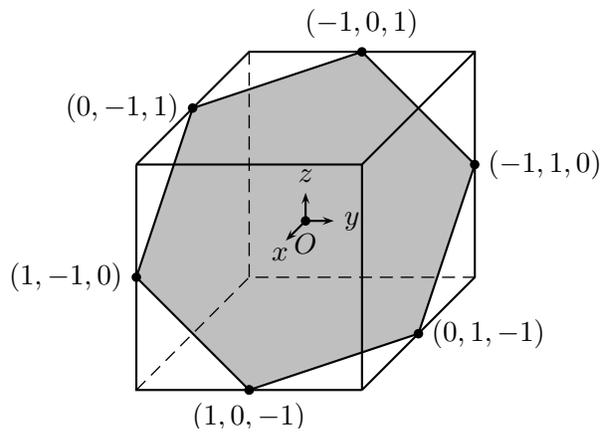


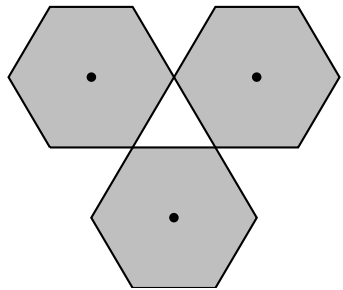
\begin{figure}[h!]
\begin{center}
\psset{unit=0.55}
\begin{pspicture*}(-2,0)(8,7)
\pspolygon*[linecolor=lightgray](2,0)(4,0)(5,1.7)(4,3.4)(2,3.4)(1,1.7)(2,0)
\psline(2,0)(4,0)(5,1.7)(4,3.4)(2,3.4)(1,1.7)(2,0)
\pspolygon*[linecolor=lightgray](4,3.4)(6,3.4)(7,5.1)(6,6.8)(4,6.8)(3,5.1)(4,3.4)
\psline(4,3.4)(6,3.4)(7,5.1)(6,6.8)(4,6.8)(3,5.1)(4,3.4)
\pspolygon*[linecolor=lightgray](0,3.4)(2,3.4)(3,5.1)(2,6.8)(0,6.8)(-1,5.1)(0,3.4)
\psline(0,3.4)(2,3.4)(3,5.1)(2,6.8)(0,6.8)(-1,5.1)(0,3.4)
\psdots(3,1.7)(5,5.1)(1,5.1)
\end{pspicture*}
\caption{Pairwise intersecting hexagons with empty
intersection}
 \label{24}
\end{center}
\end{figure}

\begin{example}
\label{atex} Let $X=\{A=(1,1,1),\ B=(1,-2,-2),\
C=(-1,0,1)\}\subseteq \mathbb{R}_{\infty}^3$. Then the set $Y$
shown in Fig.~\ref{25} is a closed, geodesically convex subspace
containing $X$ and minimal with these properties. But $Y$ is not
isometric to the tight span $T(X)$ of $X$. (T(X) is the union of
the segments $[AO]\cup [BO]\cup [CO]\subset
\mathbb{R}_{\infty}^3$.)
\end{example}
\begin{figure}[h!]
\begin{center}
\psset{unit=0.65}
\begin{pspicture}(-1,-1)(14,9.5)
\pspolygon*[linecolor=lightgray](3,3)(9,9)(9,3)(12,0)(3,3)
\psdots(0,0)(3,3)(9,9)(9,3)(12,0)
\psline(0,0)(3,3)(9,9)(9,3)(12,0)(3,3)
\uput[r](0,0){$B=(1,-2,-2)$} \uput[r](9,9){$A=(1,1,1)$}
\uput[d](12,0){$C=(-1,0,1)$} \uput[r](3.25,3.15){$(1,-1,-1)$}
\uput[u](3,5){$Y$}
\uput[r](9,3){$(-\frac{1}{3},-\frac{1}{3},\frac{1}{3})$}
\uput[l](6,6){$2$} \uput[l](1.5,1.5){$1$} \uput[d](7.5,1.5){$2$}
\uput[r](10.6,1.5){$\frac{2}{3}$} \uput[r](9,6){$\frac{4}{3}$}
\end{pspicture}
\caption{A counterexample in $\mathbb{R}^3_{\infty}$}
 \label{25}
\end{center}
\end{figure}
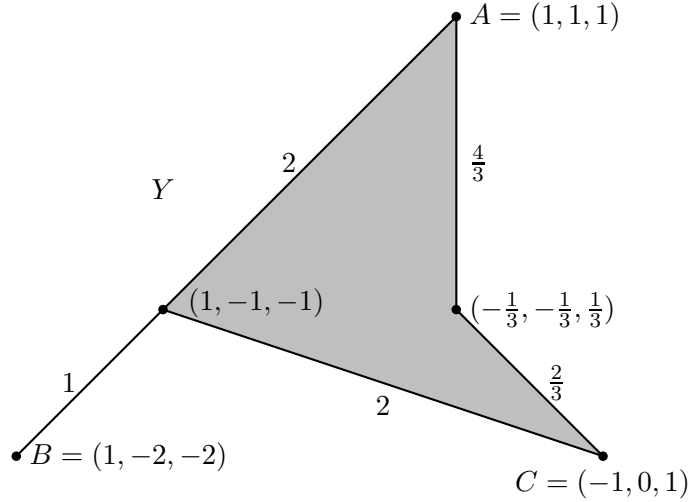

\section{Appendix}

\begin{proof}(of Lemma~\ref{at0})

A complete metric space $(X,d)$ is strictly intrinsic if for every
$x,y\in X$ there exists a midpoint i.e. a point $z\in X$ such that
$d(x,z)=d(z,y)=\frac{1}{2}d(x,y)$ (see~\cite{bur}, Theorem
2.4.16). Since a hyperconvex metric space is complete, so it will
be enough to show that midpoints exist. Assume to the contrary
that for some $x,y\in X$ no midpoint exists. Now consider an
external point $z\notin X$ and define on $X\cup\{z\}$ a metric
satisfying $d(x,z)=d(y,z)=\frac{1}{2}d(x,y)$ (to achieve this the
Lemma~\ref{at5} below can be used taking $Y=\{x,y,z\}$). The
hyperconvex metric space $(X,d)$ is injective and hence there
exists a nonexpansive retraction $r:X\cup\{z\}\rightarrow X$.
 Now, consider the point $r(z)=z'\in X$. By nonexpansiveness we get
\[
d(x,z')\leq d(x,z)=\frac{1}{2}d(x,y)
\]
and
\[
d(z',y)\leq d(z,y)=\frac{1}{2}d(x,y),
\]
which show that $z'$ is a midpoint of $x$ and $y$ contradicting our assumption.
\end{proof}

\begin{proof}(of Lemma~\ref{at2.5})

If one of the points $p$ or $q$ belongs to the set
$S_1^{\varepsilon}(u)\cap
S_2^{\delta}(u)=I^{\varepsilon\delta}(u)$, then we are done. So,
let us assume $p\in S_1^{\varepsilon}(u)\setminus
I^{\varepsilon\delta}(u)$ and $q\in S_2^{\delta}(u)\setminus
I^{\varepsilon\delta}(u)$. In that case $D_{pq}\setminus
I^{\varepsilon\delta}(u)$ becomes a disconnected set, since
$D_{pq}\cap {S_1^{\varepsilon}(u)}^\circ$ and $D_{pq}\cap
{S_2^{\delta}(u)}^\circ$ give a disjoint decomposition of this
set. The points $p$ and $q$ belong to different components of
$D_{pq}\setminus I^{\varepsilon\delta}(u)$ and hence there must be
some $t$ with $\gamma(t)\in I^{\varepsilon\delta}(u)$.
\end{proof}

\begin{proof}(of Lemma~\ref{at3})

Let $a_1\in S_1^+(p)$ and $a_2\in S_2^+(p)$ be points on $A$.
Since $A$ is geodesically convex there exist a geodesic $\gamma$
inside $A$ connecting $a_1$ and $a_2$. According to Lemma
~\ref{at2.5} there exist a $t_0$ with $\gamma(t_0)\in
I^{++}(p)\cap A$. If  $a_3\in S_1^-(p)$ and $a_4\in S_2^-(p)$ are
points on $A$ then there exist a geodesic $\alpha$ inside $A$
connecting these points and a $t_1$ with $\alpha(t_1)\in
I^{--}(p)\cap A$. So, we can write $\gamma(t_0)=(p_1+t,p_2+t)$ and
$\alpha(t_1)=(p_1-k,p_2-k)$ for some $t,k\geq0$. As the only
geodesic connecting the points $\gamma(t_0)$ and $\alpha(t_1)$ is
the segment $[\gamma(t_0), \alpha(t_1)]$, we get $p\in A$.
\end{proof}

\begin{proof}(of Lemma~\ref{at4})

Assume $p\notin A$. Then $S_i^+(p)\cap A$ and $S_i^-(p)\cap A$ constitute a disjoint  and open decomposition of $A$ contradicting the connectivity of $A$.
\end{proof}

\begin{lemma}
\label{at5} Let $(X,d_X)$ and $(Y,d_Y)$ be metric spaces with
$X\cap Y\neq\emptyset$, and assume $d_X|_{X\cap Y}=d_Y|_{X\cap
Y}$. Then there exists  a metric $d$ on $X\cup Y$, such that
$d|_X=d_X$ and $d|_Y=d_Y$.
\end{lemma}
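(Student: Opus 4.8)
The plan is the standard amalgamation (``glue two metric spaces along a common subspace'') construction. Define $d$ on $X\cup Y$ by $d|_{X\times X}=d_X$ and $d|_{Y\times Y}=d_Y$ — these two prescriptions agree on $(X\cap Y)\times(X\cap Y)$ by the hypothesis $d_X|_{X\cap Y}=d_Y|_{X\cap Y}$, so there is no conflict — and, for a genuinely mixed pair $x\in X\setminus Y$, $y\in Y\setminus X$, by
\[
d(x,y)=d(y,x):=\inf_{c\in X\cap Y}\bigl(d_X(x,c)+d_Y(c,y)\bigr).
\]
Since $X\cap Y\neq\emptyset$ this infimum is over a nonempty set and is a finite nonnegative number. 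By construction $d$ is symmetric, $d(p,p)=0$ for every $p$, and $d|_X=d_X$, $d|_Y=d_Y$; so the whole content of the lemma is that $d$ is positive definite and satisfies the triangle inequality.

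Positivity only needs to be checked for a mixed pair $x\in X\setminus Y$, $y\in Y\setminus X$. If $d(x,y)=0$, pick $c_n\in X\cap Y$ with $d_X(x,c_n)+d_Y(c_n,y)\to 0$; then $c_n\to x$ in $X$, so $x$ lies in the closure of $X\cap Y$ inside $X$ (and symmetrically $y$ in the closure of $X\cap Y$ inside $Y$), which is impossible as soon as $X\cap Y$ is closed in $X$ or in $Y$ — as it is in every use of the lemma in this paper. (More carefully: combining the triangle inequalities of $d_X$ and $d_Y$ through an arbitrary $c\in X\cap Y$ and using $d_X|_{X\cap Y}=d_Y|_{X\cap Y}$ forces $d_X(x,c)=d_Y(y,c)$ for all such $c$, so $x$ and $y$ are indistinguishable from the overlap.)

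The substance of the proof — and the step I expect to be the main obstacle — is the triangle inequality $d(p,q)\le d(p,r)+d(r,q)$, which has to be verified according to how $p,q,r$ are distributed among $X\setminus Y$, $X\cap Y$, $Y\setminus X$. If all three lie in $X$ (or all in $Y$) it is just the triangle inequality of $d_X$ (resp. $d_Y$). Most mixed configurations reduce to a single move: to bound from above a distance of the form $d(a,b)=\inf_c(\cdots)$, substitute into that infimum a point $c$ that is near-optimal for one of the two terms on the right-hand side (or, when one of the three points lies in $X\cap Y$, that overlap point itself), and then apply the triangle inequality in whichever of $X$, $Y$ contains the relevant pair. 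The one genuinely delicate case is when $r$ lies strictly on one side, say $r\in Y\setminus X$, while $p,q$ both lie in $X$ (so $d(p,q)=d_X(p,q)$ while $d(p,r)$ and $d(r,q)$ are both infima): here one picks $c,c'\in X\cap Y$ near-optimal for $d(p,r)$ and $d(r,q)$ and chains
\[
d_X(p,c)+d_Y(c,r)+d_Y(r,c')+d_X(c',q)\ \ge\ d_X(p,c)+d_Y(c,c')+d_X(c',q)\ =\ d_X(p,c)+d_X(c,c')+d_X(c',q)\ \ge\ d_X(p,q),
\]
using the triangle inequality of $d_Y$ in the first step, the hypothesis $d_X|_{X\cap Y}=d_Y|_{X\cap Y}$ in the equality, and the triangle inequality of $d_X$ twice at the end; letting $c,c'$ run to the respective infima gives $d(p,r)+d(r,q)\ge d(p,q)$. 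The symmetric case ($r\in X\setminus Y$, $p,q\in Y$) is identical with the roles of $X$ and $Y$ exchanged, and the remaining configurations (with one of $p,q,r$ in $X\cap Y$, or with two points on one side and one on the other) are handled by the same substitution-and-routing device and fall back on a case already treated.
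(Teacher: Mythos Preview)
Your construction is exactly the one the paper gives: the paper's entire proof is the single line
\[
d(x,y):=\inf_{u\in X\cap Y}\bigl(d(x,u)+d(u,y)\bigr),
\]
with no verification. You supply what the paper omits --- the case analysis for the triangle inequality --- and your chained estimate in the delicate case ($r$ on one side, $p,q$ on the other) is correct.

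You are also right to flag positivity. As stated, the lemma is not literally true: if $X\cap Y$ contains a sequence converging to a point $x\in X\setminus Y$ in $d_X$ and to a point $y\in Y\setminus X$ in $d_Y$, then the infimum gives $d(x,y)=0$, and in fact \emph{no} extending metric exists (the triangle inequality through the $c_n$ forces $d(x,y)=0$). Your fix --- assuming $X\cap Y$ closed in $X$ or in $Y$ --- is exactly what is needed, and it does hold in both uses of the lemma in the paper (in Lemma~\ref{at0} the overlap is the two-point set $\{x,y\}$; in Theorem~\ref{at2} the overlap is $A$, closed in $\mathbb{R}^2_\infty$). So your proof is correct and follows the paper's route, but is more careful than the paper itself on a point the paper glosses over.
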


\begin{proof}
One can take, for example,
\[
d(x,y):=\inf_{u\in X\cap Y}\{d(x,u)+d(u,y)\}
\]
\end{proof}

\end{document}